\title{\LARGE \bf
Wasserstein Two-Sided Chance Constraints with An Application to Optimal Power Flow
}
\author{Haoming Shen$^1$ and Ruiwei Jiang$^1$
\thanks{$^1$University of Michigan, Department of Industrial and Operations Engineering. E-mail: {\tt\small \{hmshen,ruiwei\}@umich.edu}}%
}
\begin{document}
\maketitle
\thispagestyle{empty}
\pagestyle{empty}

\begin{abstract}

As a natural approach to modeling system safety conditions, chance constraint (CC) seeks to satisfy a set of uncertain inequalities individually or jointly with high probability. Although a joint CC offers stronger reliability certificate, it is oftentimes much more challenging to compute than individual CCs. Motivated by the application of optimal power flow, we study a special joint CC, named two-sided CC. We model the uncertain parameters through a Wasserstein ball centered at a Gaussian distribution and derive a hierarchy of conservative approximations based on second-order conic constraints, which can be efficiently computed by off-the-shelf commercial solvers. In addition, we show the asymptotic consistency of these approximations and derive their approximation guarantee when only a finite hierarchy is adopted. We demonstrate the out-of-sample performance and scalability of the proposed model and approximations in a case study based on the IEEE 118-bus and 3120-bus systems.

\end{abstract}

\section{Introduction}

Chance constraint (\CC{}) is a natural approach for modeling safety conditions of a system under uncertainty. \CC{} models the safety conditions as a set of inequalities and the underlying uncertainty as a random vector. Then, it requires to satisfy these inequalities individually or jointly with high probability. For linear inequalities, \CC{} takes the form
\begin{align}
\Probt \left[ A(x) \tilde{\xi} \leq b(x) \right]
& \geq 1 - \epsilon, \tag{\textbf{CC}}
\end{align}
where \(x \in \reals^n\) are decision or design variables, \(\tilde{\xi}\) is a random vector supported on \(\Xi := \reals^m\),
\(A(\cdot) \colon \reals^n \to \reals^{q \times m}\) and
\(b(\cdot) \colon \reals^n \to \reals^q\) are affine mappings, \(\Probt\)
is the probability distribution of \(\tilde{\xi}\), and
\((1 - \epsilon) \in (1/2, 1)\) is a risk threshold that is usually close to
one, \eg{}, \(0.95\). We call~(\textbf{CC})  \textit{individual}
if \(q = 1\) and \textit{joint} if \(q \geq 2\).

With its study dating back to the  \(1950\)s~\cite{charnes-1959-chanc-const-progr,
  charnes-1958-cost-horiz, miller-1965-chanc-const, prekopa-1970-on-proba},~\CC{} finds a wide range of applications in,~\eg{}, power
system~\cite{wang-2011-chanc-const}, vehicle
routing~\cite{stewart-1983-stoch-vehic-routin}, portfolio
management~\cite{li-1995-insur-inves}, scheduling~\cite{deng-2016-decom-algor},
and facility location~\cite{miranda-2006-simul-inven}. Despite its popularity in
real-world applications, (\textbf{CC}) is in general challenging to compute because of its non-convexity and the NP-hardness of evaluating probability through multi-dimensional integral~\cite{nemirovski-2007-convex-approx, khachiyan-1989-probl-calcul}. In particular, a joint (\textbf{CC}) is oftentimes much more challenging to compute than individual (\textbf{CC})s. For example, the individual (\textbf{CC}) admits a convex or conic reformulation in various settings (see, e.g.,~\cite{luedtke-2008-integ-progr,ghaoui-2003-worst-case,zymler-2011-distr-robus,zhang-2018-ambig-chanc,li-2019-ambig-risk}), while the corresponding results for the joint (\textbf{CC})s are unavailable to date (see, e.g.,~\cite{xie-2016-deter-refor,hanasusanto-2017-ambig-joint,xie-2019-distr-robus}). Consequently, convex and tractable approximations of joint (\textbf{CC})s with performance guarantee are crucial for its practical applications.


In this paper, we consider a special joint (\textbf{CC}), which we call a two-sided chance constraint (\textbf{2SCC}) of the form
\begin{align*}
\Probt{} \left[ \ell \leq x^{\top} \tilde{\xi} \leq u \right] \geq 1 - \epsilon,
\end{align*}
where \((x, \ell, u)\) are decision variables. (\textbf{2SCC}) is a joint (\textbf{CC}) because it requires both inequalities to hold jointly, but meanwhile it is special because the two inequalities share the term \(x^{\top}\tilde{\xi}\) (up to the contrary sign). The particular form of (\textbf{2SCC}) arises from optimal power flow (OPF) when modeling the lower/upper limits of power generation and those of power flow in a transmission line (see formulation~\eqref{eq:exps-ts-drc} in Section~\ref{sec:exps}), as well as from other applications including hydrothermal unit commitment~\cite{van2018exact} and robust regression~\cite{fathabad2021outlier}.

(\textbf{2SCC}) was first proposed by~\cite{lubin-2015-two-sided}, where $\Probt$ is assumed to be Gaussian. In this case,~\cite{lubin-2015-two-sided} showed that (\textbf{2SCC}) produces a convex feasible region and derived outer conic approximations with approximation guarantee. Later,~\cite{fathabad-2021-tight-conic} considered a more general case, in which \(\Probt\) is a mixture of \(K\) Gaussian
distributions sharing the same covariance matrix and \(\epsilon\) is
sufficiently close to \(0\). Then,~\cite{fathabad-2021-tight-conic} derived an asymptotically tight conic approximation for (\textbf{2SCC}) using a piecewise linear approximation of the standard Gaussian cumulative distribution function (CDF). In most real-world applications, however, the (true) distribution $\Probt$ is not available. Under such circumstance, a common choice is to replace $\Probt$ with a crude estimate $\Prob$, which can be an empirical distribution constructed from past observations of the uncertain parameters~\cite{luedtke-2008-sampl-approx}, or a Gaussian distribution, whose mean and covariance matrix can in turn be estimated empirically~\cite{lubin-2015-two-sided}. Unfortunately, such a $\Prob$ is likely to  misrepresent $\Probt$ and the decisions thus produced have disappointing out-of-sample performance. This motivates us to consider alternative estimates of $\Probt$, or more formally, a Wasserstein ball
\begin{align*}
  \mathcal{P} := \Set{ \QProb \in \mathcal{Q}_0 \colon d_W(\QProb, \Prob) \leq \delta }
\end{align*}
around $\Prob$, which consists of all distributions that are close enough to $\Prob$. Above, \(\mathcal{Q}_0\) is the set of all distributions supported on
\(\Xi\), \(\delta > 0\) is a pre-specified radius of the Wasserstein ball, and
\(d_W(\cdot, \cdot) \colon \mathcal{Q}_0 \times \mathcal{Q}_0 \to \reals_+\)
denotes the Wasserstein distance between two distributions, such that for any \(\Prob_1\) and
\(\Prob_2\) in \(\mathcal{Q}_0\),
\begin{align*}
  d_W(\Prob_1, \Prob_2) := \inf_{\QProb_0 \sim (\Prob_1, \Prob_2)}
  \Expect_{\QProb_0} \left[ \norm{\tilde{X}_1 - \tilde{X}_2} \right],
\end{align*}
where \(\tilde{X}_1\) and \(\tilde{X}_2\) are two random variables following \(\Prob_1\) and \(\Prob_2\), respectively, \(\QProb_0\) is a
coupling of \(\Prob_1\) and \(\Prob_2\), and \(\norm{\cdot}\) is a norm. Accordingly, we robustify (\textbf{2SCC}) by satisfying the chance constraint with regard to all distributions in $\mathcal{P}$, yielding the following two-sided distributionally robust chance constraint (\textbf{2DRC}),
\begin{align*}
  \mathcal{Z} := \Set{ (x, \ell, u) \in \reals^{n+2} \colon
  \inf_{\QProb \in \mathcal{P}} \QProb \left[ \ell \leq x^{\top} \tilde{\xi} \leq u \right] \geq 1 - \epsilon
  }.
\end{align*}
The recent literature has witnessed an increasing interest in the convexity and tractable reformulations of distributionally robust chance
constraints (\textbf{DRC}). For example,~\cite{ghaoui-2003-worst-case,calafiore-2006-distr-robus,zhang-2018-ambig-chanc} derived second-order conic representations for \emph{individual} (\textbf{DRC}) when the uncertainty is modeled by its mean and covariance matrix. Using the same model of uncertainty,~\cite{xie-2017-distr-robus} derived a second-order conic representation for \emph{two-sided} (\textbf{DRC}). Similar results were obtained when shape information (e.g., unimodality and log-concavity) is incorporated~\cite{hanasusanto2015decision,li-2019-ambig-risk,li2016distributionally}. Different from these works, we model the uncertainty using the Wasserstein ball $\mathcal{P}$, which is less conservative than the moment approaches~\cite{mohajerin-2018-data-driven,kuccukyavuz2021chance}.

Convexity and tractable reformulations for \emph{joint} (\textbf{DRC}) are much scarcer to date. For example,~\cite{hanasusanto-2017-ambig-joint,xie-2016-deter-refor,xie-2019-optim-bonfer} showed NP-hardness results of these constraints unless the model of uncertainty falls into certain special settings. In addition,~\cite{xie-2019-distr-robus,ho-nguyen-2020-stron-formul,ho-nguyen-2021-distr-robus,shen-2020-chanc-const} recast joint (\textbf{DRC}) as (deterministic) mixed-integer programs. On the contrary,~\cite{shen-2021-convex-chanc} showed that a joint (\textbf{DRC}) produces a convex feasible region if (i) the estimate $\Prob$ in $\mathcal{P}$ is chosen to be log-concave and (ii) the uncertainty is decoupled from decision variables in the chance constraint, \ie{}, $A(x) \equiv A$ is independent of $x$ in the definition of (\textbf{CC}). In the case of (\textbf{2DRC}), this implies that $x$ becomes constants, while $(\ell, u)$ remain decision variables, in  $\mathcal{Z}$. In this paper, we extend~\cite{shen-2021-convex-chanc} by allowing the coupling of $x$ and $\tilde{\xi}$ in (\textbf{2DRC}). Our main contributions include
\begin{enumerate}
\item We show that \(\mathcal{Z}\) is convex. In addition, we derive a hierarchy of conservative approximations for \(\mathcal{Z}\) based on second-order conic constraints, which facilitates efficient computation through off-the-shelf commercial solvers.
\item We show that these approximations are asymptotically tight and derive their non-asymptotic approximation guarantee, when only a finite hierarchy is adopted.
\item Using the~\OPF{} problem and an IEEE 118-bus system, we numerically demonstrate the out-of-sample performance of the proposed (\textbf{2DRC}) over the alternative (\textbf{CC}), which does not model robustness. In addition, we demonstrate the scalability of the approximations using IEEE systems with up to 3120 buses.
\end{enumerate}
The rest of the paper is organized as follows: In Section~\ref{sec:cvx}, we derive a convex representation for \(\mathcal{Z}\).
In Section~\ref{sec:conic-approx}, we construct an asymptotically exact conic
approximation of \(\mathcal{Z}\) and derive non-asymptotic approximation
guarantee. In Section~\ref{sec:exps}, we numerically demonstrate the effectiveness and scalability of our approach in~\OPF{} problems.

\textit{Notation.} \(I_n\) denotes an \(n \times n\) identity matrix and \(\norm{\cdot}_{\ast}\) denotes the dual norm of \(\norm{\cdot}\). \(\Phi(\cdot)\) denotes the CDF of a 1-dimensional standard Gaussian.

\section{Convexity of \(\mathcal{Z}\)}
\label{sec:cvx}

We study the convexity of $\mathcal{Z}$. First, we review the definition of Value-at-Risk (\VaR)~\cite{rockafellar-1999-optim-condit}.
\begin{definition}
Let \(\tilde{X}\) be a random variable with distribution \(\Prob_{\tilde{X}}\). Then, the
\((1 - \epsilon)\)-\VaR{} of $\tilde{X}$ is defined as
\begin{align*}
  \VaR_{(1 - \epsilon)}(\tilde{X}) := \inf \Big\{ x \colon \Prob_{\tilde{X}} \big[ \tilde{X} \leq x \big] \geq 1 - \epsilon \Big\}.
\end{align*}
\end{definition}
Second, we specify the configurations of \(\mathcal{P}\).
\begin{assumption} \label{assump-wasser}
The Wasserstein ball $\mathcal{P}$ is such that
\begin{enumerate*}[label=(\alph*)]
\item the reference distribution \(\Prob\) is a multivariate Gaussian distribution \(\mathcal{N}(\mu, \Sigma)\) with \(\Sigma \succ 0\), and
\item the norm \(\norm{\cdot}\) in \(d_W\) is an ellipsoidal norm with regard to \(\Sigma^{1/2}\), \ie{}, \(\norm{\cdot} = \norm{\Sigma^{-1/2}(\cdot)}_2\) (or equivalently, \(\norm{\cdot}_{\ast} = \norm{\Sigma^{1/2}(\cdot)}_2\)).
\end{enumerate*}
\end{assumption}
Assumption~\ref{assump-wasser} facilitates the convex representation of \(\mathcal{Z}\) in Theorem~\ref{thm:convex} and the inner approximations derived in Section~\ref{sec:conic-approx}. In particular, Assumption 1(a) is motivated by the OPF problem, in which the forecast errors of renewable energy are usually modeled by a Gaussian distribution or other log-concave alternatives~\cite{lubin-2015-two-sided}. Consequently, the potential misspecification by \(\Prob\) can be restored by the robustness of (\textbf{2DRC}). In addition, Assumption 1(b) can be made without much loss of modeling power because all Wasserstein distances are equivalent.

We now establish a convex representation of \(\mathcal{Z}\).
\begin{lemma} \label{lemma:convex}
Suppose that \(\epsilon \in (0, 1/2)\) and Assumption~\ref{assump-wasser} holds. Define
\begin{align*}
  \mathcal{Z}_0 := \Set{ (\ell, u) \in \reals^2 \colon
  \inf_{\QProb \in \mathcal{P}_0} \QProb \left[ \ell \leq \tilde{\xi} \leq u \right] \geq 1 - \epsilon
  },
\end{align*}
where \(\mathcal{P}_0\) is a Wasserstein ball centered around the standard Gaussian distribution \(\Prob_0\) with the radius \(\delta\). Then, for any \(x \neq 0\), \((x, \ell, u) \in \mathcal{Z}\) if and only if \(\left(\frac{\ell - x^{\top}\mu}{\|x\|_*}, \frac{u - x^{\top}\mu}{\|x\|_*}\right) \in \mathcal{Z}_0\).
\end{lemma}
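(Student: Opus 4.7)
The plan is to reduce the multivariate distributionally robust chance constraint defining $\mathcal{Z}$ to the one-dimensional one defining $\mathcal{Z}_0$ via a linear pushforward. For fixed nonzero $x$, I would introduce the affine map $S \colon \reals^m \to \reals$ given by $S(\xi) := (x^{\top}\xi - x^{\top}\mu)/\|x\|_*$. Since the chain of inequalities $\ell \le x^{\top}\xi \le u$ is equivalent to $(\ell - x^{\top}\mu)/\|x\|_* \le S(\xi) \le (u - x^{\top}\mu)/\|x\|_*$, the lemma reduces to showing that the pushforward operation maps the ambient Wasserstein ball exactly onto the one-dimensional one, i.e., $S_\# \mathcal{P} = \mathcal{P}_0$, where $S_\# \QProb$ denotes the pushforward of $\QProb$ under $S$.

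The inclusion $S_\# \mathcal{P} \subseteq \mathcal{P}_0$ rests on two observations. First, Assumption~\ref{assump-wasser}(b) and Cauchy-Schwarz yield $|S(\xi_1) - S(\xi_2)| = |(\Sigma^{1/2}x)^{\top}\Sigma^{-1/2}(\xi_1 - \xi_2)|/\|x\|_* \le \|\xi_1 - \xi_2\|$, so $S$ is $1$-Lipschitz; pushing any coupling of $(\QProb,\Prob)$ forward under $(S,S)$ then yields a coupling of $(S_\# \QProb, S_\# \Prob)$ of no larger cost, hence $d_W(S_\# \QProb, S_\# \Prob) \le d_W(\QProb,\Prob) \le \delta$. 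Second, under Assumption~\ref{assump-wasser}(a), $x^{\top}\tilde{\xi} \sim \mathcal{N}(x^{\top}\mu, x^{\top}\Sigma x)$ with $x^{\top}\Sigma x = \|x\|_*^2$, so $S_\# \Prob$ is the standard Gaussian $\Prob_0$. Combining the two conclusions yields $S_\# \QProb \in \mathcal{P}_0$ for every $\QProb \in \mathcal{P}$.

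The harder direction is $\mathcal{P}_0 \subseteq S_\# \mathcal{P}$: given any $\QProb' \in \mathcal{P}_0$, I would explicitly lift it to some $\QProb \in \mathcal{P}$ with $S_\# \QProb = \QProb'$. Start from an optimal coupling $\gamma$ on $\reals \times \reals$ of $(\QProb', \Prob_0)$ attaining $\Expect_\gamma[|\eta - \eta'|] \le \delta$. Draw $(\eta,\eta') \sim \gamma$, then draw $\xi'$ from the regular conditional distribution of $\tilde{\xi}$ under $\Prob$ given $S(\tilde{\xi}) = \eta'$, and set $\xi := \xi' + \tfrac{\eta - \eta'}{\|x\|_*}\Sigma x$. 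A direct computation gives $S(\xi) = \eta$ and $\|\xi - \xi'\| = |\eta - \eta'|$, so the induced joint law $\Gamma$ is a coupling of its $\xi$-marginal $\QProb$ with $\Prob$ at cost at most $\delta$, while $S_\# \QProb = \QProb'$ by construction.

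Once $S_\# \mathcal{P} = \mathcal{P}_0$ is in hand, the infimum of $\QProb[\ell \le x^{\top}\tilde{\xi} \le u]$ over $\QProb \in \mathcal{P}$ equals the infimum of $\QProb'[\ell' \le \tilde{\eta} \le u']$ over $\QProb' \in \mathcal{P}_0$, which delivers the lemma. The main obstacle is the lift used in the reverse inclusion: Assumption~\ref{assump-wasser}(b) is precisely what makes the shift along $\Sigma x$ achieve the unit rate $\|\xi - \xi'\| = |\eta - \eta'|$, so that the Wasserstein radius is preserved exactly rather than only up to a constant. Without such alignment between the Wasserstein geometry and the Gaussian covariance, an isometric lift of this form need not exist.
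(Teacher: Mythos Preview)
Your argument is correct, but it differs substantively from the paper's. The paper does not work directly with the ambiguity sets; instead it invokes an external deterministic reformulation (Proposition~1 in~\cite{shen-2021-convex-chanc}) which characterizes membership in \(\mathcal{Z}\) and \(\mathcal{Z}_0\) by a pair of conditions of the form \(f(x,\ell,u)\geq\delta\) together with a nominal probability bound, and then verifies that these scalar conditions coincide once one observes that \(x^{\top}(\tilde{\zeta}-\mu)/\|x\|_*\) is standard Gaussian. Your route is more direct and self-contained: you establish \(S_\#\mathcal{P}=\mathcal{P}_0\) by combining the \(1\)-Lipschitz contraction under \(S\) with an explicit isometric lift along the direction \(\Sigma x\), which is exactly where Assumption~\ref{assump-wasser}(b) enters. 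What the paper's approach buys is reuse: the deterministic representation via \(f_0\) and \(g_\epsilon\) is needed anyway for Theorem~\ref{thm:convex} and the conic approximations, so recycling it here costs nothing. What your approach buys is transparency and generality: it isolates the reduction as a purely measure-theoretic fact about Wasserstein balls under affine pushforward, independent of the specific form of the safe set, and would apply verbatim to any event that depends on \(\tilde{\xi}\) only through \(x^{\top}\tilde{\xi}\).
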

\begin{proof}
First, Proposition~\(1\) in~\cite{shen-2021-convex-chanc} implies that \((\ell, u) \in \mathcal{Z}_0\) if and only if it satisfies
\begin{gather}
f_0(\ell, u) \geq \delta \label{eq:deter-reform-Z0-1}\\
\text{and} \quad \Prob_0 \left[\ell \leq \tilde{\zeta}_0 \leq u \right] \geq 1 - \epsilon, \label{eq:deter-reform-Z0-2}
\end{gather}
where \(\tilde{\zeta}_0\) is the standard Gaussian random variable, \(\phi(\ell, u, \zeta) := \min \set{\zeta - \ell, u - \zeta}\), and 
\begin{gather*}
f_0(\ell, u) := \int\limits_0^{\VaR_{\epsilon}[ \phi(\ell, u, \tilde{\zeta}_0) ]} \Big(
\Prob_0 \left[  \phi(\ell, u, \tilde{\zeta}_0) \geq t \right] - ( 1 - \epsilon)\Big) \dLeb{t}.
\end{gather*}
Likewise, the same proposition implies that \((x, \ell, u) \in \mathcal{Z}\) if and only if it satisfies
\begin{gather*}
f(x, \ell, u) \geq \delta \\
\text{and} \quad \Prob\left[\ell \leq x^{\top} \tilde{\zeta} \leq u \right] \geq 1 - \epsilon,
\end{gather*}
where \(\tilde{\zeta}\) is a random variable with distribution \(\Prob\), \(\tilde{\psi} := \phi(\ell, u, x^{\top} \tilde{\zeta}) / \norm{x}_{\ast}\), and
\begin{align*}
f(x, \ell, u)
& := \int\limits_0^{\VaR_{\epsilon}(\tilde{\psi})}
\Big(\Prob \left[ \tilde{\psi} \geq t \right] - ( 1 - \epsilon) \Big) \dLeb{t}.
\end{align*}

Second, pick any \((x, \ell, u) \in \mathcal{Z}\) with \(x \neq 0\). By definition, we recast \(\tilde{\psi}\) as
\begin{align*}
\hspace{-0.4cm} & \ \frac{\min\{x^{\top} \tilde{\zeta} - \ell, u - x^{\top} \tilde{\zeta}\}}{\|x\|_*} \\
\hspace{-0.4cm} = & \ \min\left\{ \frac{x^{\top} (\tilde{\zeta} - \mu)}{\|x\|_*} - \frac{\ell - x^{\top} \mu}{\|x\|_*}, \frac{u - x^{\top} \mu}{\|x\|_*} - \frac{x^{\top} (\tilde{\zeta} - \mu)}{\|x\|_*} \right\}.
\end{align*}
Since \(x^{\top} (\tilde{\zeta} - \mu)/\|x\|_*\) is Gaussian, \(\mathbb{E}[x^{\top} (\tilde{\zeta} - \mu)/\|x\|_*] = 0\), and \(\text{Var}[x^{\top} (\tilde{\zeta} - \mu)/\|x\|_*] = 1\), \(x^{\top} (\tilde{\zeta} - \mu)/\|x\|_*\) is a standard Gaussian random variable and \(\tilde{\psi}\) follows the same distribution as \(\phi\big((\ell - x^{\top} \mu)/\|x\|_*, (u - x^{\top} \mu)/\|x\|_*, \tilde{\zeta}_0\big)\). It follows that
\begin{align*}
f_0\left( \frac{\ell - x^{\top} \mu}{\|x\|_*}, \frac{u - x^{\top} \mu}{\|x\|_*} \right) = f(x, \ell, u) \geq \delta.
\end{align*}
Similarly, we have
\begin{align*}
& \Prob_0 \left[\frac{\ell - x^{\top} \mu}{\|x\|_*} \leq \tilde{\zeta}_0 \leq \frac{u - x^{\top} \mu}{\|x\|_*} \right] \\
= \ & \Prob \left[\frac{\ell - x^{\top} \mu}{\|x\|_*} \leq \frac{x^{\top}(\tilde{\zeta} - \mu)}{\|x\|_*} \leq \frac{u - x^{\top} \mu}{\|x\|_*} \right] \\
= \ & \Prob \left[\ell \leq x^{\top}\tilde{\zeta} \leq u \right] \geq 1 - \epsilon,
\end{align*}
which yields that \(\left(\frac{\ell - x^{\top}\mu}{\|x\|_*}, \frac{u - x^{\top}\mu}{\|x\|_*}\right) \in \mathcal{Z}_0\).

Third, pick any \((x, \ell, u) \in \mathbb{R}^{n+2}\) with \(\left(\frac{\ell - x^{\top}\mu}{\|x\|_*}, \frac{u - x^{\top}\mu}{\|x\|_*}\right) \in \mathcal{Z}_0\) and \(x \neq 0\). Then, same arguments yield that
\begin{align*}
f(x, \ell, u) = f_0\bigg( & \frac{\ell - x^{\top} \mu}{\|x\|_*}, \frac{u - x^{\top} \mu}{\|x\|_*} \bigg) \geq \delta \\
\text{and} \quad \Prob \left[\ell \leq x^{\top}\tilde{\zeta} \leq u \right] = & \ \Prob_0 \left[\frac{\ell - x^{\top} \mu}{\|x\|_*} \leq \tilde{\zeta}_0 \leq \frac{u - x^{\top} \mu}{\|x\|_*} \right] \\
\geq & \ 1 - \epsilon.
\end{align*}
It follows that \((x, \ell, u) \in \mathcal{Z}\) and this completes the proof.
\end{proof}

\begin{theorem} \label{thm:convex}
Suppose that \(\epsilon \in (0, 1/2)\) and Assumption~\ref{assump-wasser} holds. Define
\begin{gather}
g_{\epsilon}(\ell, u) := \int\limits_0^{+\infty} \big[\Phi(u - t) - \Phi(\ell + t) - (1 - \epsilon) \big]^+ \dLeb{t} \nonumber \\
\text{and} \quad \mathcal{Z}_1 := \cl{} \set{ (\ell, u, s) \colon s > 0, (\ell / s, u / s)\in \mathcal{Z}_0 }, \label{eq:cvx-def-Z1}
\end{gather}
where \(\cl{}\) denotes the closure operator. Then,
\begin{gather*}
\mathcal{Z}_0 = \Set{(\ell, u) \colon \delta \leq g_{\epsilon}(\ell, u)}.
\end{gather*}
In addition, \((x, \ell, u) \in \mathcal{Z}\) if and only if there exists an \(s \geq \|x\|_*\) such that \((\ell - x^{\top}\mu, u - x^{\top}\mu, s) \in \mathcal{Z}_1\). Finally, both \(\mathcal{Z}_0\) and \(\mathcal{Z}\) are convex and closed.
\end{theorem}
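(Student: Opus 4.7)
The plan is to tackle the theorem's three assertions in order, using Lemma~\ref{lemma:convex} as the workhorse. For the representation $\mathcal{Z}_0 = \{(\ell, u) : g_{\epsilon}(\ell, u) \geq \delta\}$, Lemma~\ref{lemma:convex} already reduces $(\ell, u) \in \mathcal{Z}_0$ to the pair of conditions \eqref{eq:deter-reform-Z0-1}--\eqref{eq:deter-reform-Z0-2}. A direct evaluation yields $\Prob_0[\phi(\ell, u, \tilde{\zeta}_0) \geq t] = \Phi(u - t) - \Phi(\ell + t)$ for every $t \geq 0$, so the integrand of $f_0$ coincides with the pre-truncation integrand of $g_\epsilon$ and is strictly decreasing in $t$. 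When \eqref{eq:deter-reform-Z0-2} holds, this integrand is nonnegative at $t = 0$ and vanishes exactly at $t = \VaR_\epsilon[\phi(\ell, u, \tilde{\zeta}_0)]$, so the $[\cdot]^+$ truncation in $g_\epsilon$ and the upper limit of integration in $f_0$ cut out the same interval, giving $f_0 = g_\epsilon$. When \eqref{eq:deter-reform-Z0-2} fails, the integrand is strictly negative on $[0, \infty)$, forcing $g_\epsilon = 0 < \delta$ and $f_0 \leq 0 < \delta$ simultaneously. Thus \eqref{eq:deter-reform-Z0-1}--\eqref{eq:deter-reform-Z0-2} are jointly equivalent to $g_\epsilon \geq \delta$.

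For the perspective characterization of $\mathcal{Z}$ via $\mathcal{Z}_1$: for $x \neq 0$, Lemma~\ref{lemma:convex} identifies $(x, \ell, u) \in \mathcal{Z}$ with $\left(\frac{\ell - x^{\top}\mu}{\|x\|_*}, \frac{u - x^{\top}\mu}{\|x\|_*}\right) \in \mathcal{Z}_0$, which by the defining relation \eqref{eq:cvx-def-Z1} is exactly $(\ell - x^{\top}\mu, u - x^{\top}\mu, \|x\|_*) \in \mathcal{Z}_1$, giving the ``only if'' direction with $s = \|x\|_*$. For the converse with a general $s \geq \|x\|_* > 0$, I will use that every $(\ell', u') \in \mathcal{Z}_0$ satisfies $\ell' \leq 0 \leq u'$ (any interval under the standard Gaussian carrying probability $\geq 1 - \epsilon > 1/2$ must contain the origin), together with the monotonicity of $g_\epsilon$ (nonincreasing in $\ell$, nondecreasing in $u$): shrinking the scaling factor from $s$ down to $\|x\|_*$ only widens the rescaled interval and preserves the $g_\epsilon \geq \delta$ certificate, so one can always pull back to the canonical witness $s = \|x\|_*$ and invoke Lemma~\ref{lemma:convex}. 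The degenerate case $x = 0$ reduces to $\ell \leq 0 \leq u$, in which the rescaled pairs $(\ell/s, u/s)$ expand into $\mathcal{Z}_0$ as $s \downarrow 0$ (perturbing $\ell, u$ slightly off zero when they vanish), and the closure in \eqref{eq:cvx-def-Z1} absorbs $(\ell, u, 0)$ into $\mathcal{Z}_1$.

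For convexity and closedness: $\mathcal{Z}_0 = \{g_\epsilon \geq \delta\}$ becomes convex once $g_\epsilon$ is shown to be concave. The key observation is that on the region where the $[\cdot]^+$ in $g_\epsilon$ is active one necessarily has $u - t \geq 0$ and $\ell + t \leq 0$, so the two terms of $\Phi(u - t) - \Phi(\ell + t)$ sit on the concave and convex branches of $\Phi$ respectively, producing joint concavity in $(\ell, u)$; combined with the scalar case of~\cite{shen-2021-convex-chanc} (and a perspective/Prekopa-type closure), this yields concavity of $g_\epsilon$. Then $\mathcal{Z}_1$ is convex as the closure of the perspective of the convex set $\mathcal{Z}_0$, and $\mathcal{Z}$ is convex as the projection onto $(x, \ell, u)$ of the convex set $\{(x, \ell, u, s) : s \geq \|x\|_*,\ (\ell - x^{\top}\mu, u - x^{\top}\mu, s) \in \mathcal{Z}_1\}$. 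Closedness of $\mathcal{Z}_0$ follows from continuity of $g_\epsilon$; $\mathcal{Z}_1$ is closed by construction; and $\mathcal{Z}$ is closed because the witness $s = \|x\|_*$ identified above depends continuously on $x$, so limits of sequences in $\mathcal{Z}$ yield limit witnesses inside the closed set $\mathcal{Z}_1$. The main obstacle I anticipate is establishing concavity of $g_\epsilon$: since $[\cdot]^+$ of a concave function is not in general concave, the argument has to exploit the precise alignment between where the truncation becomes active and where $\Phi$ switches branches, or be repackaged through the log-concavity machinery of~\cite{shen-2021-convex-chanc}.
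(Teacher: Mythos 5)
Your handling of the first two assertions tracks the paper's own proof closely: the identification \(f_0 = g_{\epsilon}\) via monotonicity of the integrand in \(t\), the observation that \(g_{\epsilon}(\ell,u)\geq\delta>0\) forces the probability constraint~\eqref{eq:deter-reform-Z0-2}, the two-case split (\(x=0\) versus \(x\neq 0\)) for the \(\mathcal{Z}_1\) equivalence, and the use of monotonicity of \(g_{\epsilon}\) together with a limiting argument through the closure in~\eqref{eq:cvx-def-Z1} to pull a witness \(s\geq\|x\|_*\) back to \(s=\|x\|_*\) are all essentially the paper's steps.

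The gap is in your argument for convexity of \(\mathcal{Z}_0\). The intermediate claim you lean on --- that \(g_{\epsilon}\) is concave --- is false, and the ``alignment'' you hope to exploit cannot rescue it: the truncation \([\cdot]^+\) destroys concavity at the boundary of the support regardless of which branch of \(\Phi\) is active. Concretely, along the symmetric ray \((\ell,u)=(-a,a)\) one has \(g_{\epsilon}(-a,a)=\int_0^{(a-c)^+}\bigl(2\Phi(a-t)-2+\epsilon\bigr)\,\mathrm{d}t\) with \(c:=\Phi^{-1}(1-\epsilon/2)\), which vanishes to second order at \(a=c\) (it behaves like a positive constant times \((a-c)^2\)), so \(g_{\epsilon}\) is locally convex, not concave, there. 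What is true --- and what the paper itself uses later in Proposition~\ref{prop:cvx-g-2points} via Theorem~2 of~\cite{gupta-1980-brunn-minkow} --- is that \(\sqrt{g_{\epsilon}}\) is concave on \(\{(\ell,u)\colon\Prob_0[\ell\leq\tilde{\zeta}_0\leq u]\geq 1-\epsilon\}\): the Brunn--Minkowski/Pr\'ekopa machinery yields concavity only of the \(1/(1+m)\) power after integrating out \(m=1\) variable. Since \(\{g_{\epsilon}\geq\delta\}=\{\sqrt{g_{\epsilon}}\geq\sqrt{\delta}\}\) and, by your own first part, this set is contained in the region where \(\sqrt{g_{\epsilon}}\) is concave, convexity of \(\mathcal{Z}_0\) still follows --- but you must route the argument through \(\sqrt{g_{\epsilon}}\) (or simply invoke Theorem~8 of~\cite{shen-2021-convex-chanc} with Theorem~4.39 of~\cite{shapiro-2009-lectur}, which is all the paper does). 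The remainder of your convexity/closedness chain (\(\mathcal{Z}_1\) as the closed conic hull of \(\mathcal{Z}_0\), \(\mathcal{Z}\) via the lifted representation) is sound once this is repaired.
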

\begin{proof}
First, Theorem~\(8\) in~\cite{shen-2021-convex-chanc} and Theorem~\(4.39\) in~\cite{shapiro-2009-lectur} imply that~\eqref{eq:deter-reform-Z0-1}
and~\eqref{eq:deter-reform-Z0-2} produce a convex and closed feasible region, i.e., \(\mathcal{Z}_0\) is convex and closed.

Second, we represent \(f_0(\ell, u)\) as
\begin{align*}
& \, \int\limits_0^{\VaR_{\epsilon}[ \phi(\ell, u, \tilde{\zeta}_0)] }
\Big(\Prob_0 \left[  \phi(\ell, u, \tilde{\zeta}_0) \geq t \right] - ( 1 - \epsilon)\Big) \dLeb{t} \\
= \, & \, \int\limits_0^{+\infty}
\Big(\Prob_0 \left[  \phi(\ell, u, \tilde{\zeta}_0) \geq t \right] - ( 1 - \epsilon)\Big)^+ \dLeb{t} \\
= \, & \, \int\limits_0^{+\infty}
\Big[\Phi(u - t) - \Phi(\ell + t) - ( 1 - \epsilon) \Big]^+ \dLeb{t},
\end{align*}
where the first equality is because the integrand is monotonically decreasing in \(t\) and the second equality is by definition of the function \(\phi\). Since \(\delta > 0\), constraint~\eqref{eq:deter-reform-Z0-1} implies that there exists a \(t \geq 0\) such that \(\Phi(u - t) - \Phi(\ell + t) > 1 - \epsilon\), or equivalently, \(\Prob_0[\ell + t \leq \tilde{\zeta}_0 \leq u - t] > 1 - \epsilon\), which implies constraint~\eqref{eq:deter-reform-Z0-2}. Hence, \(\mathcal{Z}_0 = \Set{(\ell, u) \colon \delta \leq g_{\epsilon}(\ell, u)}\).

Third, \(\mathcal{Z}_1\) is convex and closed because it is the conic hull of \(\mathcal{Z}_0\). Hence, to prove that \(\mathcal{Z}\) is convex and closed, it remains to show that \((x, \ell, u) \in \mathcal{Z}\) if and only if there exists an \(s \geq \|x\|_*\) such that \((\ell - x^{\top}\mu, u - x^{\top}\mu, s) \in \mathcal{Z}_1\). To this end, we discuss the following two cases.
\begin{enumerate}
\item Suppose that \(x = 0\). For any \((0, \ell, u) \in \mathcal{Z}\), we have \(\ell \leq 0 \leq u\) because otherwise \(\Prob[\ell \leq 0 \leq u] < 1/2 < 1 - \epsilon\), violating the assumption that \((0, \ell, u) \in \mathcal{Z}\). Then, \(s := 1/n\) for a sufficiently large integer \(n\) ensures that \((\ell/s, u/s) \in \mathcal{Z}_0\) and so \((\ell, u, s) \in \mathcal{Z}_1\). On the contrary, for any \((0, \ell, u) \in \reals^{n+2}\) such that there exists an \(s \geq 0\) with \((\ell, u, s) \in \mathcal{Z}_1\), by definition of \(\mathcal{Z}_1\) there exists a sequence \(\{(\ell_n, u_n, s_n)\}_{n=1}^{\infty}\) converging to \((\ell, u, s)\) such that \(s_n > 0\) and \(g_{\epsilon}(\ell_n/s_n, u_n/s_n) \geq \delta\) for all \(n\). Then, \(\ell_n < 0\) and \(u_n > 0\) for all \(n\) because otherwise \(g_{\epsilon}(\ell_n/s_n, u_n/s_n) = 0 < \delta\). Driving \(n\) to infinity yields that \(\ell \leq 0\) and \(u \geq 0\). Hence, \((0, \ell, u) \in \mathcal{Z}\).
\item Suppose that \(x \neq 0\). Pick any \((x, \ell, u) \in \mathcal{Z}\), then Lemma~\ref{lemma:convex} implies that \(\left(\frac{\ell - x^{\top} \mu}{\|x\|_*}, \frac{u - x^{\top} \mu}{\|x\|_*} \right) \in \mathcal{Z}_0\). Hence, \(s := \|x\|_* > 0\) ensures that \((\ell - x^{\top} \mu, u - x^{\top} \mu, s) \in \mathcal{Z}_1\). On the contrary, pick any \((x, \ell, u) \in \reals^{n+2}\) such that \(x \neq 0\) and there exists an \(s \geq \|x\|_* > 0\) with \((\ell - x^{\top}\mu, u - x^{\top}\mu, s) \in \mathcal{Z}_1\). By definition of \(\mathcal{Z}_1\), there exists a sequence \(\{(\ell_n, u_n, s_n)\}_{n=1}^{\infty}\) converging to \((\ell - x^{\top}\mu, u - x^{\top}\mu, s)\) such that \(s_n > 0\) and \(g_{\epsilon}(\ell_n/s_n, u_n/s_n) \geq \delta\) for all \(n\). Then, 
\begin{align*}
g_{\epsilon}\left(\frac{\ell - x^{\top}\mu}{\|x\|_*}, \frac{u - x^{\top}\mu}{\|x\|_*}\right) \geq & \  g_{\epsilon}\left(\frac{\ell - x^{\top}\mu}{s}, \frac{u - x^{\top}\mu}{s}\right) \\
= & \ \lim_{n \rightarrow \infty} g_{\epsilon}\left(\frac{\ell_n}{s_n}, \frac{u_n}{s_n}\right) \geq \delta,
\end{align*}
where the first inequality is because the function \(g_{\epsilon}(\ell, u)\) is nonincreasing in \(\ell\) and nondecreasing in \(u\), and the equality is due to the dominated convergence theorem. It follows that \(\left(\frac{\ell - x^{\top} \mu}{\|x\|_*}, \frac{u - x^{\top} \mu}{\|x\|_*} \right) \in \mathcal{Z}_0\) and so \((x, \ell, u) \in \mathcal{Z}\) by Lemma~\ref{lemma:convex}. This completes the proof.
\end{enumerate}
\end{proof}

\section{Tight Conic Approximation of \(\mathcal{Z}\)}
\label{sec:conic-approx}
Although Theorem~\ref{thm:convex} produces a convex representation of \(\mathcal{Z}\), it is not computable because \(g_{\epsilon}(\ell, u)\) is defined by an integration. In this section, we derive an \emph{inner} approximation of \(\mathcal{Z}\) from that of \(\mathcal{Z}_0\). The basic idea was proposed by~\cite{lubin-2015-two-sided} to derive outer approximations for chance constraints.

\subsection{Polyhedral inner approximation of \(\mathcal{Z}_0\)}
\label{sec:poly-approx-Z0}

To illustrate the basic idea, we define the \(\delta\)-level set of
\(g_{\epsilon}(\ell, u)\):
\begin{align*}
  \mathcal{C}_{\delta} := \Set{ (\ell, u) \in \reals^2 \colon g_{\epsilon}(\ell, u) = \delta }.
\end{align*}
The set \(C_{\delta} \subseteq \reals_- \times \reals_+\) because \(\epsilon < 1/2\). We plot \(C_{\delta}\) with fixed \(\epsilon = 0.1, \delta = 0.05\) and varying \(\delta\) in Figure~\ref{fig:Z0-approx}, from which we
observe that (i) \(C_{\delta}\) is convex and (ii) a polyhedral inner approximation of \(C_{\delta}\) can be constructed based on a set of points on
\(\mathcal{C}_{\delta}\).
\begin{figure}[!htbp]
\centering
\includegraphics[width=0.8\linewidth]{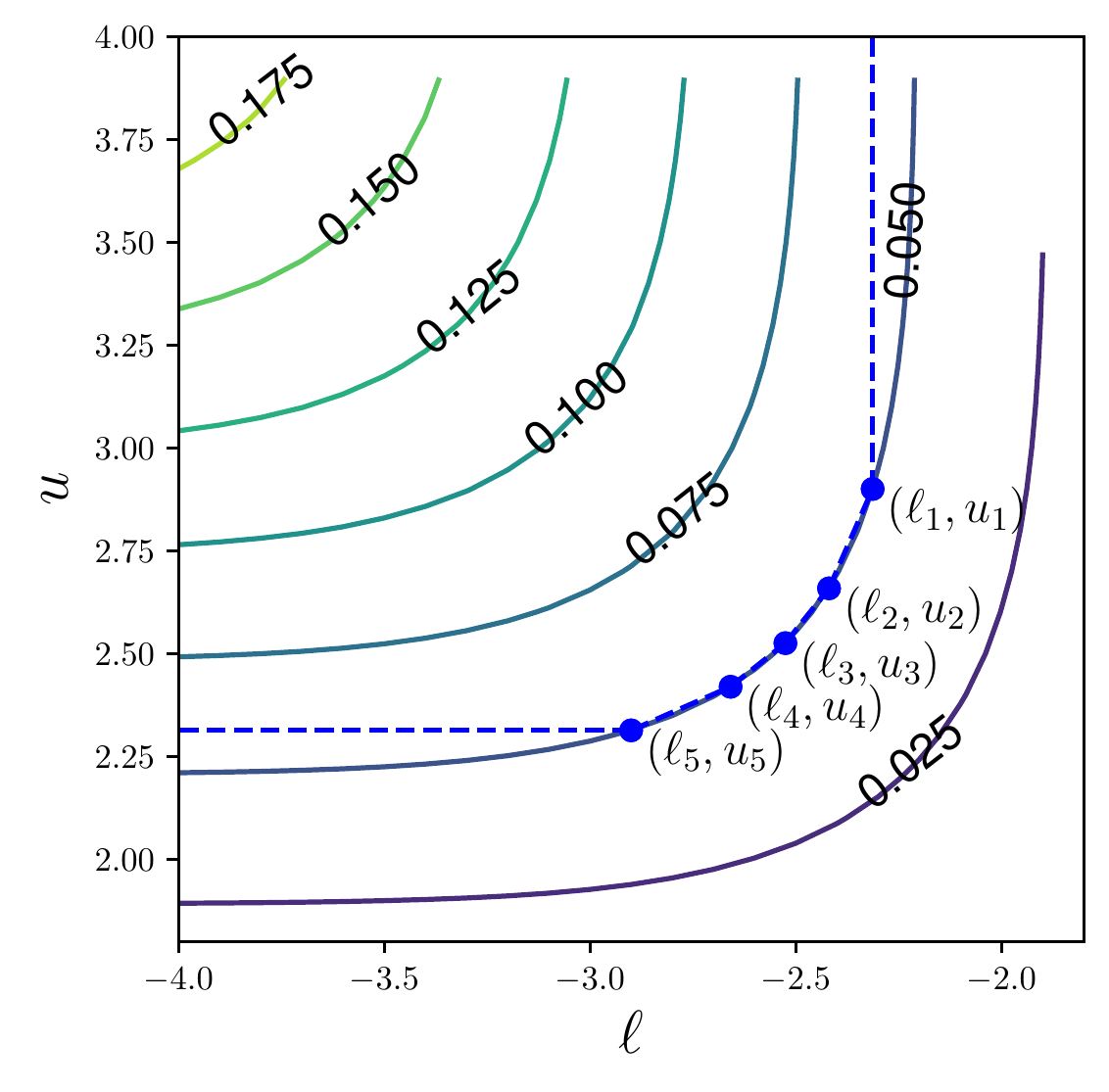}
\caption{Contour of \(g_{\epsilon}(\ell, u)\) with varying \(\delta\) and a polyhedral inner approximation}
\label{fig:Z0-approx}
\end{figure}
We now formalize this idea.
\begin{definition}
Define \(\hat{\mathcal{Z}}^N_0\) as the inner approximation of \(\mathcal{Z}_0\) (i.e., \(\mathcal{Z}^N_0 \subseteq \mathcal{Z}_0\)) obtained from the \(N\) points
\[ \set{(\ell_1, u_1), \ldots, (\ell_i, u_i), \ldots, (\ell_N, u_N)}\]
on \(C_{\delta}\) with decreasing \(\ell_i\)'s. Then, 
\[\hat{\mathcal{Z}}^N_0 := \set{(\ell, u) \colon \eqref{eq:approx-ZN0-p1},~\eqref{eq:approx-ZN0-pi},~\eqref{eq:approx-ZN0-pN}},\]
where
\begin{align}
  & \ell \leq \ell_1, \label{eq:approx-ZN0-p1}\\
  & \forall i \in [N-1]: \nonumber{} \\
  & (u - u_i) (\ell_i - \ell_{i + 1}) \geq (u_i - u_{i+1}) (\ell - \ell_i), \label{eq:approx-ZN0-pi} \\
  & u \geq u_N. \label{eq:approx-ZN0-pN}
\end{align}
\end{definition}
Specifically, the boundary of~\eqref{eq:approx-ZN0-p1}
(resp.~\eqref{eq:approx-ZN0-pN}) is the vertical (resp.~horizontal) ray emitting
from \((\ell_1, u_1)\) (resp.~\((\ell_N, u_N)\)) and the boundaries
of~\eqref{eq:approx-ZN0-pi} are the line segments connecting \((\ell_i, u_i)\)
Accordingly, we obtain the following conic inner approximation of \(\mathcal{Z}\) by Theorem~\ref{thm:convex}:
\begin{align*}
  \hat{\mathcal{Z}}^N := \Set{ (x, \ell, u) \colon
  \begin{aligned}
  & \exists s \in \reals: \norm{x}_{\ast} \leq s, \\
  & \big((\ell - x^{\top} \mu) / s, (u - x^{\top} \mu) / s\big) \in \hat{\mathcal{Z}}^N_0
  \end{aligned}
  },
\end{align*}
where the last constraint in \(\hat{\mathcal{Z}}^N\) can be recast as the following linear inequalities:
\begin{align*}
  & \left\{
    \begin{aligned}
    & \ell - x^{\top} \mu \leq \ell_1 s, \\
    & u - x^{\top} \mu \geq u_N s, \\
    & \forall i \in [N - 1]: \\
    & \left(\frac{\ell_i - \ell_{i+1}}{u_i - u_{i+1}}\right) \big(u - x^{\top} \mu - u_i s\big) \geq \ell - x^{\top} \mu - \ell_i s.
    \end{aligned}\right.
\end{align*}
\(\hat{\mathcal{Z}}^N\) can be directly computed by commercial solvers, and it inherits the approximation guarantee of
\(\hat{\mathcal{Z}}^N_0\), which we analyze in
Sections~\ref{sec:approx-pieces-i}--\ref{sec:approx-pieces-1N}.

\subsection{Approximation error induced by~\eqref{eq:approx-ZN0-pi}}
\label{sec:approx-pieces-i}

We first quantify the error of approximating a concave function \(h\) by an affine function \(\bar{h}\) from above.
\begin{lemma}\label{lem:concave-peaky-approx}
Suppose that \(h(\lambda) \colon [0, 1] \to \reals_+\) is a positive,
subdifferentiable, and strictly concave function. Define
\begin{align*}
\hat{h}(\lambda) := \min \set{ h(0) + \partial h(0) \lambda, h(1) + \partial h(1) (\lambda - 1)},
\end{align*}
where \(\partial h(\lambda)\) is a subgradient of \(h\) at
\(\lambda\),
\begin{align}
  \tau := \frac{\hat{h}(\lambda_{\ast})}{ \lambda_{\ast} h(1) + (1 - \lambda_{\ast}) h(0)} \geq 1, \label{eq:approx-construct-tau}
\end{align}
where
\(\lambda_{\ast} := \frac{h(1) - h(0) - \partial h(1)}{\partial h(0) - \partial h(1)} \in [0, 1]\), and
\begin{align*}
  \bar{h}(\lambda) := \tau \cdot  \big(h(1) - h(0)\big)\lambda + \tau \cdot h(0).
\end{align*}
Then, \(\frac{1}{\tau}\bar{h}(\lambda) \leq h(\lambda) \leq  \bar{h}(\lambda)\) for all \(\lambda \in [0,1]\).
\end{lemma}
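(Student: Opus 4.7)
The plan is to reduce the pair of desired inequalities to a comparison involving two natural envelopes of $h$: the piecewise-linear \emph{upper} envelope $\hat{h}$, which by construction is the pointwise minimum of the two supporting lines of the concave function $h$ at the endpoints $0$ and $1$; and the \emph{lower} envelope $\ell(\lambda) := h(0) + (h(1) - h(0))\lambda$, i.e., the secant through $(0, h(0))$ and $(1, h(1))$. By the definition of $\bar{h}$, we have $\bar{h}(\lambda) = \tau \cdot \ell(\lambda)$, so the claim $\tfrac{1}{\tau}\bar{h} \leq h \leq \bar{h}$ on $[0,1]$ is equivalent to $\ell(\lambda) \leq h(\lambda) \leq \tau \cdot \ell(\lambda)$.

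The left inequality $\ell \leq h$ is immediate from the concavity of $h$, since the secant joining any two points of the graph of a concave function lies below the graph. For the right inequality, I would use the dual fact $h \leq \hat{h}$ (each supporting line of a concave function dominates the function) to reduce the claim to $\hat{h}(\lambda) \leq \tau \cdot \ell(\lambda)$, i.e., $\hat{h}(\lambda)/\ell(\lambda) \leq \tau$ for all $\lambda \in [0,1]$. A direct check gives $\ell(\lambda_{\ast}) = \lambda_{\ast} h(1) + (1 - \lambda_{\ast}) h(0)$, so the definition \eqref{eq:approx-construct-tau} of $\tau$ coincides with $\hat{h}(\lambda_{\ast})/\ell(\lambda_{\ast})$. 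Hence the task reduces to showing that the ratio $\hat{h}/\ell$ attains its maximum over $[0,1]$ at $\lambda = \lambda_{\ast}$.

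To establish this, I would split $[0,1]$ at the kink $\lambda_{\ast}$. On $[0, \lambda_{\ast}]$, both $\hat{h}(\lambda) = h(0) + \partial h(0)\lambda$ and $\ell(\lambda) = h(0) + (h(1) - h(0))\lambda$ are affine and share the positive intercept $h(0)$. Differentiating the ratio $\hat{h}/\ell$ via the quotient rule yields a numerator of the form $h(0) \cdot \bigl[\partial h(0) - (h(1) - h(0))\bigr]$, which is nonnegative because the supporting line at $0$ dominates $h$ at $\lambda = 1$, giving $\partial h(0) \geq h(1) - h(0)$. Thus $\hat{h}/\ell$ is nondecreasing on $[0, \lambda_{\ast}]$. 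A symmetric computation on $[\lambda_{\ast}, 1]$, using the analogous bound $\partial h(1) \leq h(1) - h(0)$, shows that the ratio is nonincreasing there, so its maximum is indeed attained at $\lambda_{\ast}$. The inequality $\tau \geq 1$ then follows by evaluating at the endpoint $\lambda=0$, where $\hat{h}(0)/\ell(0) = 1$.

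I expect the main obstacle to be purely bookkeeping: verifying that the closed-form value of $\tau$ in \eqref{eq:approx-construct-tau} really matches $\hat{h}(\lambda_{\ast})/\ell(\lambda_{\ast})$ at the kink, and confirming that strict concavity together with subdifferentiability forces $\partial h(0) > \partial h(1)$ so that $\lambda_{\ast}$ is well-defined and lies in $[0,1]$. Everything else follows from the concavity of $h$ combined with the one-variable quotient-rule calculation described above.
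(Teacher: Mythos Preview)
Your proposal is correct and shares the same skeleton as the paper's proof: both first note that $\tfrac{1}{\tau}\bar h$ is precisely the secant $\ell(\lambda)=(1-\lambda)h(0)+\lambda h(1)$, so $\tfrac{1}{\tau}\bar h\le h$ is immediate from concavity, and both reduce the upper bound to showing $\hat h(\lambda)\le \tau\,\ell(\lambda)=\bar h(\lambda)$ on $[0,1]$ after using $h\le\hat h$.

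The only genuine difference is how this last inequality is established. You show that the ratio $\hat h/\ell$ is nondecreasing on $[0,\lambda_\ast]$ and nonincreasing on $[\lambda_\ast,1]$ via the quotient rule, using the concavity bounds $\partial h(0)\ge h(1)-h(0)\ge \partial h(1)$, so its maximum is $\tau$. The paper instead argues geometrically: from $\bar h(0)=\tau h(0)\ge \hat h(0)$ together with $\bar h(\lambda_\ast)=\hat h(\lambda_\ast)$ it deduces that the slope of $\bar h$ is at most $\partial h(0)$, and from $\bar h(1)=\tau h(1)\ge \hat h(1)$ that the slope is at least $\partial h(1)$; hence $\bar h$ is a supporting line of the concave piecewise-linear function $\hat h$ at the kink, giving $\hat h\le\bar h$ directly. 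Your route is a touch more computational but entirely self-contained; the paper's is shorter once one sees that a line through the kink with slope between the two pieces must dominate $\hat h$.
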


\begin{proof}
By concavity of \(h\), \(h(\lambda) \leq \hat{h}(\lambda)\). First, we show \(h(\lambda) \leq \bar{h}(\lambda)\). Observe that:
(i) \(\overline{h}(0) = \tau h(0) \geq h(0) = \hat{h}(0)\), and
\(\overline{h}(\lambda_{\ast}) = \hat{h}(\lambda_{\ast})\), implying
\(\nabla \overline{h} \leq \partial h(0)\); (ii) \(\overline{h}(1) = \tau h(1) \geq h(1) = \hat{h}(1)\), implying
\(\nabla \overline{h} \geq \partial h(1)\). It follows that \(\overline{h}\) is a supporting
hyperplane of the hypograph of \(\hat{h}\). Thus, we have \(h(\lambda) \leq \hat{h}(\lambda) \leq \bar{h}(\lambda)\) for all \(\lambda \in [0, 1]\).
Second, \(\frac{1}{\tau}\bar{h}(\lambda) \leq h(\lambda)\) because \(h\) is concave.
\end{proof}
Now we quantify the approximation error induced by~\eqref{eq:approx-ZN0-pi}.
\begin{proposition}\label{prop:cvx-g-2points}
Suppose that \(\epsilon \in (0, 1/2), \delta > 0\), and
\((\ell_1, u_1), (\ell_2, u_2)\) are two points in
\(\mathcal{C}_{\delta}\). Let
\((\ell_{\lambda}, u_{\lambda})\) be their convex combination such that
\((\ell_{\lambda}, u_{\lambda}) := (1 -\lambda) (\ell_1, u_1) + \lambda (\ell_2, u_2)\) for  \(\lambda \in [0, 1]\) and define
\(
  s(\lambda) := g_{\epsilon}(\ell_{\lambda}, u_{\lambda}).
\)
Then, it holds that
\begin{enumerate}
\item \(\sqrt{s(\lambda)}\) is a positive, concave, and differentiable function over an open interval containing \([0, 1]\);
\item \(\sup_{\lambda \in [0, 1]} s(\lambda) \leq \tau_s^2 \delta\), where \(\tau_s\) is constructed from~\eqref{eq:approx-construct-tau} by replacing \(h(\lambda)\) with \(\sqrt{s(\lambda)}\).
\item \(1 \leq \tau_s \leq 1 + O(\norm{(\ell_1, u_1) - (\ell_2, u_2)}_1)\).
\end{enumerate}
\end{proposition}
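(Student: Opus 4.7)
The plan is to treat the three claims in sequence, observing that claim~(2) follows mechanically from claim~(1) combined with Lemma~\ref{lem:concave-peaky-approx}, and that claim~(3) reduces to first-order estimates of $s$ at the endpoints. I expect claim~(1), in particular the concavity of $\sqrt{s}$, to be the main obstacle, because it asserts something strictly stronger than the convexity of $\mathcal{Z}_0$ established in Theorem~\ref{thm:convex}: it says $g_\epsilon$ is $1/2$-concave along the chord, not merely quasi-concave.

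For claim~(1), positivity is immediate because both endpoints lie on $\mathcal{C}_\delta\subset\mathcal{Z}_0$, and convexity of $\mathcal{Z}_0$ forces $s(\lambda)\ge\delta>0$ on $[0,1]$; continuity of $g_\epsilon$ extends strict positivity to an open interval containing $[0,1]$. Differentiability I would obtain from the implicit function theorem applied to $\Phi(u-T)-\Phi(\ell+T)=1-\epsilon$, whose partial derivative in $T$ equals $-[\phi(\ell+T)+\phi(u-T)]<0$; this produces a smooth $T(\ell,u)$ that can then be differentiated under the integral sign in $g_\epsilon(\ell,u)=\int_0^{T(\ell,u)}[\Phi(u-t)-\Phi(\ell+t)-(1-\epsilon)]\,dt$. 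For concavity of $\sqrt{s}$, I would cast it as the pointwise Hessian inequality $2g_\epsilon\,\nabla^2 g_\epsilon\preceq\nabla g_\epsilon\,\nabla g_\epsilon^\top$ (equivalent to $2s\,s''\le(s')^2$ along every chord). Using the closed forms
\[
\partial_\ell g_\epsilon=\Phi(\ell)-\Phi(\ell+T),\qquad \partial_u g_\epsilon=\Phi(u)-\Phi(u-T),
\]
together with $T_\ell=-\phi(\ell+T)/[\phi(\ell+T)+\phi(u-T)]$ and $T_u=\phi(u-T)/[\phi(\ell+T)+\phi(u-T)]$, the Hessian has a clean diagonal-minus-rank-one structure, and the matrix inequality reduces to a scalar quadratic inequality in the direction $v=(v_1,v_2)$. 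Establishing this inequality, presumably by expressing $g_\epsilon=\int_0^T t\,[\phi(\ell+t)+\phi(u-t)]\,dt$ via integration by parts and exploiting the log-concavity of $\phi$ so that the marginal terms $\Phi(\ell+T)-\Phi(\ell)$ and $\Phi(u)-\Phi(u-T)$ dominate the diagonal contributions $g_\epsilon\phi(\ell)$, $g_\epsilon\phi(u)$, is the hardest step.

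With claim~(1) in hand, claim~(2) is an immediate application of Lemma~\ref{lem:concave-peaky-approx} with $h=\sqrt{s}$: its hypotheses are exactly the conclusions of claim~(1), and $h(0)=h(1)=\sqrt\delta$ collapses the upper linear envelope to the constant $\bar h(\lambda)=\tau_s\sqrt\delta$, so squaring yields $s(\lambda)\le\tau_s^2\delta$. For claim~(3), the formula for $\tau_s$ combined with $\partial h(0)\ge 0\ge \partial h(1)$ (forced by $h(0)=h(1)$ plus concavity) gives
\[
\tau_s-1\;=\;\frac{-\partial h(0)\,\partial h(1)}{\sqrt\delta\,[\partial h(0)-\partial h(1)]}\;\le\;\frac{\min\{|\partial h(0)|,|\partial h(1)|\}}{\sqrt\delta}.
\]
Since $h'=s'/(2\sqrt{s})$ and $\sqrt{s(0)}=\sqrt{s(1)}=\sqrt\delta$, we have $2\sqrt\delta\,\partial h(0)=s'(0)=(\ell_2-\ell_1)\partial_\ell g_\epsilon(\ell_1,u_1)+(u_2-u_1)\partial_u g_\epsilon(\ell_1,u_1)$, with the analogous identity at $\lambda=1$; because $|\partial_\ell g_\epsilon|,|\partial_u g_\epsilon|\le 1$ (each is a difference of CDF values), this yields $|s'(0)|,|s'(1)|\le \|(\ell_1,u_1)-(\ell_2,u_2)\|_1$, and hence $\tau_s-1\le \|(\ell_1,u_1)-(\ell_2,u_2)\|_1/(2\delta)$, which is the claimed $O(\cdot)$ estimate.
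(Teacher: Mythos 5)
Your reductions of claims (2) and (3) to Lemma~\ref{lem:concave-peaky-approx} are sound and essentially match the paper: claim (2) is indeed immediate once \(\sqrt{s}\) is positive, concave, and differentiable with \(s(0)=s(1)=\delta\), and your endpoint-derivative bound for claim (3) is correct (the identity \(\tau_s-1=\frac{-\partial h(0)\,\partial h(1)}{\sqrt{\delta}\,[\partial h(0)-\partial h(1)]}\le \min\{|\partial h(0)|,|\partial h(1)|\}/\sqrt{\delta}\) checks out, and bounding \(|\partial_\ell g_\epsilon|,|\partial_u g_\epsilon|\le 1\) gives \(\tau_s\le 1+\norm{(\ell_1,u_1)-(\ell_2,u_2)}_1/(2\delta)\), a slightly cleaner constant than the paper's \(M_{1,2}/(2\delta)\), which the paper obtains by instead bounding the Lebesgue measure of the integration region by \(\min\{|u_\lambda|,|\ell_\lambda|\}\)).

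The genuine gap is exactly where you flagged it: the concavity of \(\sqrt{s}\) is asserted with only a speculative plan ("presumably by \(\ldots\) exploiting the log-concavity of \(\phi\)"), and the direct Hessian route you sketch is not obviously workable. With \(T(\ell,u)\) defined by \(\Phi(u-T)-\Phi(\ell+T)=1-\epsilon\), the Hessian of \(g_\epsilon\) decomposes as a diagonal matrix with entries \(\Phi'(\ell)-\Phi'(\ell+T)\le 0\) and \(\Phi'(u)-\Phi'(u-T)\le 0\) (since \(\ell\le\ell+T\le 0\le u-T\le u\)) \emph{plus} a positive-semidefinite rank-one term \(\frac{1}{\Phi'(\ell+T)+\Phi'(u-T)}vv^{\top}\) with \(v=(\Phi'(\ell+T),-\Phi'(u-T))\); so even plain concavity of \(g_\epsilon\) is not immediate from this decomposition, let alone the stronger inequality \(2g_\epsilon\nabla^2 g_\epsilon\preceq\nabla g_\epsilon\nabla g_\epsilon^{\top}\) needed for \(1/2\)-concavity. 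The paper sidesteps this entirely by observing that \(\varphi(\ell,u,t):=\Phi(u-t)-\Phi(\ell+t)\) is \emph{jointly} concave in \((\ell,u,t)\) on the convex set \(\{\varphi\ge 1-\epsilon\}\) (because there \(u-t\ge 0\) and \(\ell+t\le 0\), where \(\Phi\) is respectively concave and convex), and then invoking the Brunn--Minkowski-type theorem of~\cite{gupta-1980-brunn-minkow}: integrating a nonnegative concave function over a one-dimensional variable yields a \(1/2\)-concave function of the remaining variables. You need either to import that citation or to actually carry out the Hessian verification; as written, the central claim of the proposition is unproved.
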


\begin{proof}
Since \(\epsilon < 1/2\), the function
\(\varphi (\ell, u, t) := \Phi(u - t) - \Phi(\ell + t)\) is jointly concave on
\(\{(\ell, u, t) \colon \varphi(\ell, u, t) \geq (1 - \epsilon)\}\). Then, \(\sqrt{g_{\epsilon}(\ell, u)}\) is concave on
\(\set{(\ell, u) \colon \Prob_0 [ \ell \leq \tilde{\zeta}_0 \leq u] \geq 1 - \epsilon}\) by Theorem~\(2\) in~\cite{gupta-1980-brunn-minkow}.
Thus, \(\sqrt{s(\lambda)}\) is positive and concave over an open
interval containing \([0, 1]\). By the  Leibnitz integration rule,
\(s(\lambda)\) is differentiable on \([0, 1]\) and 
\begin{align*}
  & \frac{d}{d \lambda} s(\lambda) \\
  = \;
  & \int\limits_0^{+\infty} \frac{d}{d \lambda} \Big( \Phi(u_{\lambda} - t) - \Phi(\ell_{\lambda} + t) - (1 - \epsilon)\Big)^+ \dLeb{t} \\
  = \;
  & \int\limits_0^{+\infty} \frac{1}{\sqrt{2\pi}} \left(
    e^{-\frac{(u_{\lambda} - t)^2}{2}} (u_2 - u_1) - e^{-\frac{(\ell_{\lambda} + t)^2}{2}} (\ell_2 - \ell_1) \right) \\
  & \phantom{\int\limits_0^{+\infty}}
    \cdot \Ind{\Phi(u_{\lambda} - t) - \Phi(\ell_{\lambda} + t) \geq (1 - \epsilon)}
    \dLeb{t}.
\end{align*}
Hence, \(\sqrt{s(\lambda)}\) is differentiable by the chain rule. By
Lemma~\ref{lem:concave-peaky-approx}, there exists a \(\tau_s > 0\) such that
\begin{align*}
  \sqrt{s(\lambda)}
  \leq \tau_s\big((1 - \lambda) \sqrt{s(0)} + \lambda \sqrt{s(1)} \big)
  = \tau_s \sqrt{\delta}.
\end{align*}
Then, for \(M_{1,2} := \max\set{\abs{u_1}, \abs{u_2}, \abs{\ell_1}, \abs{\ell_2}}\), we have
\begin{align*}
  & \abs[\Big]{\frac{d}{d \lambda} s(\lambda)} \\
  \leq \;
  & \int\limits_0^{+\infty} \frac{1}{\sqrt{2\pi}} \big( \abs{u_2 - u_1} + \abs{\ell_2 - \ell_1} \big) \\
  & \quad \cdot \Ind{\Phi(u_{\lambda} - t) - \Phi(\ell_{\lambda} + t) \geq (1 - \epsilon)} \dLeb{t} \\
  \leq \;
  & \left( \abs{u_2 - u_1} + \abs{\ell_2 - \ell_1} \right) \cdot \Leb{}\big(\big[0, \min\set{\abs{u_{\lambda}}, \abs{\ell_{\lambda}}}\big]\big) \\
  \leq \;
  & M_{1,2} \left( \abs{u_2 - u_1} + \abs{\ell_2 - \ell_1} \right),
\end{align*}
where \(\Leb{}(\cdot)\) denotes the Lebsgue measure and 
the second inequality is because
\(\epsilon \in (0, 1/2)\), implying that \(u_{\lambda} - t \geq 0 \text{ and
} \ell_\lambda + t \leq 0\), \ie{}, \(t \leq u_{\lambda}\) and \(t \leq -\ell_{\lambda}\).
Finally, for all \(\lambda \in [0, 1]\), we derive
\begin{align*}
\hat{s}(\lambda) := \; & \min \left\{
  \sqrt{s(0)} + \frac{d}{d \lambda} \sqrt{s(0)} \cdot \lambda, \right. \\
  & \quad \left. \sqrt{s(1)} + \frac{d}{d \lambda} \sqrt{s(1)} \cdot (\lambda - 1) \right\}\\
  \leq \;
  & \sqrt{\delta} + \min \left\{
    \abs[\Big]{\frac{d}{d \lambda} \sqrt{s(0)}},
    \abs[\Big]{\frac{d}{d \lambda} \sqrt{s(1)}} \right\} \\
  = \;
  & \sqrt{\delta} +  \frac{1}{2\sqrt{\delta}} \min \left\{
    \abs[\Big]{\frac{d}{d \lambda} s(0)},
    \abs[\Big]{\frac{d}{d \lambda} s(1)} \right\} \\
  \leq \;
  & \sqrt{\delta} + \frac{1}{2 \sqrt{\delta}} M_{1,2} \big( \abs{u_2 - u_1} + \abs{\ell_2 - \ell_1} \big),
\end{align*}
where we use the fact \(s(0) = s(1) = \delta\). Thus, by definition of \(\tau_s\) we have
\begin{align*}
  \tau_s = &  \frac{\hat{s}(\lambda_{\ast})}{ \lambda_{\ast} \sqrt{s(1)} + (1 - \lambda_{\ast}) \sqrt{s(0)}} \\
  \leq \;
  & \frac{1}{\sqrt{\delta}} \left( \sqrt{\delta} + \frac{1}{2 \sqrt{\delta}} M_{1,2} \big( \abs{u_2 - u_1} + \abs{\ell_2 - \ell_1} \big) \right) \\
  = \;
  & 1 + \frac{M_{1,2}}{2 \delta} \norm{(\ell_1, u_1) - (\ell_2, u_2)}_1.
\end{align*}
This completes the proof.
\end{proof}

\subsection{Approximation errors induced by~\eqref{eq:approx-ZN0-p1} and~\eqref{eq:approx-ZN0-pN}}
\label{sec:approx-pieces-1N}

We define the approximation errors induced by \(\ell \leq \ell_1\) and \(u \geq u_N\) as 
\(\mathbf{err}_1 := \sup_{u \geq u_1} g_{\epsilon}(\ell_1, u) - \delta\) and \(\mathbf{err}_N := \sup_{\ell \leq \ell_N} g_{\epsilon}(\ell, u_N) - \delta\), respectively. Since the function \(g_{\epsilon}(\ell, u)\) is nonincreasing in \(\ell\) and nondecreasing in \(u\), we have
\begin{align}
  \overline{g}(u_N) & = \sup_{\ell \leq \ell_N} g_{\epsilon}(\ell, u_N) = g_{\epsilon}(-\infty, u_N), \label{eq:approx-err-ZN0-pN}\\
  \text{and} \quad \underline{g}(\ell_1) & = \sup_{u \geq u_1} g_{\epsilon}(\ell_1, u) = g_{\epsilon}(\ell_1, \infty), \label{eq:approx-err-ZN0-p1}
\end{align}
where we define, for any \((\ell, u) \in \mathbb{R}_-\times\mathbb{R}_+\),
\begin{align*}
  \overline{g}(u) & := \int\limits_0^{+\infty} \left( \Phi(u - t) - (1 - \epsilon) \right)^+ \dLeb{t}, \\
  \underline{g}(\ell) & := \int\limits_0^{+\infty} \left( \epsilon - \Phi(\ell + t) \right)^+ \dLeb{t}.
\end{align*}

The next two propositions imply that if \(\ell_N\) (resp.~\(u_1\)) is sufficiently
small (resp.~large) then \(\mathbf{err}_N\) (resp.~\(\mathbf{err}_1\))
becomes arbitrarily small.

\begin{proposition}\label{prop:cvx-g-limit-1}
Suppose that \(\epsilon \in (0, 1/2)\) and \(\delta > 0\). Then, for a sequence
of points 
\(\set{(\ell_n, u_n), n \in \naturals} \subseteq \mathcal{C}_{\delta}\),
if \(\ell_n \searrow -\infty\) as \(n \to \infty\) then \(u_n \to u^{\ast}\) as \(n \to \infty\), where \(u^{\ast}\) is the solution of the equation  \(\overline{g}(u) = \delta\).
\end{proposition}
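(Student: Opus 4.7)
The plan is to identify $u^{\ast}$ as the unique solution of $\overline{g}(u) = \delta$, trap the sequence $\{u_n\}$ between $u^{\ast}$ and $u_1$ via monotonicity, and then pass to the limit using dominated convergence. First I would verify that $\overline{g}$ is continuous, vanishes on $(-\infty, \Phi^{-1}(1-\epsilon)]$, and is strictly increasing on $[\Phi^{-1}(1-\epsilon), \infty)$ with $\overline{g}(u) \to \infty$ as $u \to \infty$: by Leibniz, $\overline{g}'(u) = \int_0^{\infty} \phi(u-t)\, \mathbf{1}\{u - t \geq \Phi^{-1}(1-\epsilon)\}\, dt$, which is strictly positive wherever $\overline{g}(u) > 0$. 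Hence $u^{\ast}$ exists and is unique. The same computation shows that $g_{\epsilon}(\ell, \cdot)$ is strictly increasing wherever it is positive, so on $\mathcal{C}_{\delta}$ the relation $g_{\epsilon}(\ell, u) = \delta$ defines $u$ as a single-valued function $u(\ell)$.

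Next I would derive the two-sided bounds $u^{\ast} \leq u_n \leq u_1$. Since $\Phi(\ell_n + t) \geq 0$, we have $g_{\epsilon}(\ell_n, u_n) \leq \overline{g}(u_n)$, so $\overline{g}(u_n) \geq \delta = \overline{g}(u^{\ast})$, whence $u_n \geq u^{\ast}$ by monotonicity of $\overline{g}$. For the upper bound, the implicit function theorem applied at any point of $\mathcal{C}_{\delta}$ gives $u'(\ell) = -\partial_{\ell} g_{\epsilon} / \partial_u g_{\epsilon} \geq 0$ (the numerator is nonpositive, the denominator is strictly positive), so $u(\ell)$ is nondecreasing in $\ell$; since $\ell_n$ decreases monotonically to $-\infty$, $u_n = u(\ell_n)$ is monotonically nonincreasing and therefore converges to some $u^{\infty} \in [u^{\ast}, u_1]$.

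Finally I would apply dominated convergence. The dominating integrand $[\Phi(u_1 - t) - (1-\epsilon)]^+$ has compact support contained in $[0, u_1 - \Phi^{-1}(1-\epsilon)]$ and is therefore integrable. Pointwise, $\Phi(\ell_n + t) \to 0$ and $\Phi(u_n - t) \to \Phi(u^{\infty} - t)$, so $\delta = g_{\epsilon}(\ell_n, u_n) \to \overline{g}(u^{\infty})$, which forces $u^{\infty} = u^{\ast}$ by the uniqueness established in the first step. I expect the main delicate point to be justifying the single-valuedness and monotonicity of $u(\ell)$ on $\mathcal{C}_{\delta}$, which rests on strict positivity of $\partial_u g_{\epsilon}$ on the positivity region; everything else is routine once the structural properties of $\overline{g}$ are in hand.
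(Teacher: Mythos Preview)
Your argument is correct and takes a genuinely different route from the paper's. Both proofs begin by observing that $u_n$ is nonincreasing; the paper simply asserts this from the monotonicity of $g_\epsilon$ in each argument, and indeed your appeal to the implicit function theorem is heavier than necessary, since single-valuedness and monotonicity of $u(\ell)$ along $\mathcal{C}_\delta$ follow at once from the strict monotonicity of $g_\epsilon(\ell,\cdot)$ combined with $g_\epsilon$ being nonincreasing in $\ell$. The real difference is in how the limit is identified. You trap $u_n$ in $[u^{\ast}, u_1]$, extract a limit $u^{\infty}$ by monotone convergence of the sequence, and then pass to the limit inside the identity $g_\epsilon(\ell_n,u_n)=\delta$ with a single application of dominated convergence to obtain $\overline{g}(u^{\infty})=\delta$, whence $u^{\infty}=u^{\ast}$. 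The paper instead views $g_n(\cdot):=g_\epsilon(\ell_n,\cdot)$ as a sequence of functions, proves \emph{uniform} convergence $g_n\to\overline{g}$ on a compact interval (again via dominated convergence on the difference), and then sandwiches $u_n$ between $u^{\ast}$ and $(\overline{g})^{-1}(\delta+\varepsilon)$, concluding by continuity of $(\overline{g})^{-1}$. Your route is shorter and avoids the uniform-convergence and inverse-function machinery; the paper's route has the mild advantage that it does not rely on knowing the limit $u^{\infty}$ exists in advance, only that the $u_n$ stay in a fixed compact set.
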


\begin{proof}
Since \(\ell_n \searrow -\infty\) and \((\ell_n, u_n) \in \mathcal{C}_{\delta}\),
\(u_n\) is decreasing in \(n\). Consider the sequence of functions
\(\set{g_n, n \in \naturals}\), where
\begin{align*}
  g_n(u) := \int\limits_0^{+\infty} \big( \Phi(u - t) - \Phi(\ell_n + t) - (1 - \epsilon) \big)^+ \dLeb{t}.
\end{align*}
Evidently, \(g_n\) is increasing, bounded from above by \(\overline{g}\), and continuous for all \(n\) by the dominated convergence
theorem. Take a \(\underline{u} > 0\) such that
\(\overline{g}(\underline{u}) < \delta\) and define a restricted domain \(\dom_g := [\underline{u}, u_1]\) for all
\(g_n\)'s and \(\overline{g}\). Since
\(g_n(\underline{u}) \leq \underline{g}(\underline{u}) < \delta\) for all \(n\) and
\(g_n(u_1) \geq g_1(u_1) = \delta\), the solution of equations
\(\set{u \colon g_n(u) = \delta} \subseteq \dom_g\) by the intermediate value
theorem. First, we show that \(g_n \to \overline{g}\) uniformly as
\(n \to \infty\) on \(\dom_g\). Notice that
\begin{align*}
  & \abs{g_n(u) - \overline{g}(u)} \\
  \leq \;
  & \int\limits_0^{+\infty} \Big| \big( \Phi(u - t) - \Phi(\ell_n + t) - (1 - \epsilon) \big)^+ \\
  & \phantom{\int\limits_0^{+\infty}} - \big( \Phi(u - t) - (1 - \epsilon) \big)^+ \Big| \dLeb{t} \\
  \leq \;
  & \int\limits_0^{+\infty} \Phi(\ell_n + t) \cdot \Ind{ \Phi(u - t) \geq (1 - \epsilon)} \dLeb{t} \\
  = \;
  & \int\limits_0^{+\infty} \Phi(\ell_n + t) \cdot \Ind{ t \leq u_1 - \Phi^{-1} (1 - \epsilon)} \dLeb{t}.
\end{align*}
For any \(u \in \dom_g\), the dominated convergence theorem implies that
\begin{align*}
  & \lim_{n \to \infty} \abs[\Big]{g_n(u) - \overline{g}(u)}  \\
  \leq \;
  & \int\limits_0^{+\infty} \lim_{n \to \infty} \Phi(\ell_n + t) \cdot \Ind{ t \leq u_1 - \Phi^{-1} (1 - \epsilon)} \dLeb{t} = 0.
\end{align*}
Due to the strict monotonicity of \(\overline{g}\) in \(u\), its inverse
function \((\overline{g})^{-1}\) is well defined. Furthermore, it is continuous
because \(\dom_g\) is compact. Second, we bound the distance between
\(u_n\) and \(u_{\ast}\). For any \(\varepsilon > 0\), there exists
\(N_{\varepsilon} \in \naturals\) such that
\begin{align*}
  n > N_{\varepsilon} \implies \sup_{u \in \dom_g} \abs{g_n(u) - \overline{g}(u)} < \varepsilon.
\end{align*}
Let \(u^{\ast}_n\) be the solution of \(g_n(u) = \delta\), then for all
\(n > N_{\varepsilon}\),
\begin{align*}
  u_{\ast} \leq u^{\ast}_n \leq (\overline{g})^{-1}(\delta + \varepsilon),
\end{align*}
where the first inequality is because \(g_n\) is monotone and
\(g_n(u^{\ast}_n) = \delta = \overline{g}(u^{\ast}) \geq g_n(u^{\ast})\), and the
second inequality is because \((\overline{g})^{-1}\) is monotone and
\(\overline{g}(u^{\ast}_n) \leq g_n(u^{\ast}_n) + \varepsilon\). We complete the proof by noting that
\begin{align*}
  \inf_{\varepsilon > 0} \sup_{n \geq N_{\varepsilon}} \abs{ u^{\ast}_n - u^{\ast}}
  \leq \inf_{\varepsilon > 0} \left( (\overline{g})^{-1}(\delta + \varepsilon) - u^{\ast} \right) = 0,
\end{align*}
where the last equality is because \((\overline{g})^{-1}\) is continuous.
\end{proof}

\begin{proposition}\label{prop:cvx-g-limit-2}
Suppose that \(\epsilon \in (0, 1/2)\) and \(\delta > 0\). Then, for a sequence
of points 
\(\set{(\ell_n, u_n), n \in \naturals} \subseteq \mathcal{C}_{\delta}\), if 
\(u_n \nearrow +\infty\) as \(n \to \infty\), then \(\ell_n \to \ell^{\ast}\) as \(n \to \infty\), where  \(\ell^{\ast}\) is the
solution of the equation \(\underline{g}(\ell) = \delta\). 
\end{proposition}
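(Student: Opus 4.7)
The cleanest plan is to derive Proposition~\ref{prop:cvx-g-limit-2} from Proposition~\ref{prop:cvx-g-limit-1} by exploiting a reflection symmetry of the problem. Using the identity $\Phi(-x) = 1 - \Phi(x)$, a direct substitution in the integral definitions yields
\begin{align*}
g_{\epsilon}(\ell, u) = g_{\epsilon}(-u, -\ell) \quad \text{and} \quad \underline{g}(\ell) = \overline{g}(-\ell),
\end{align*}
so the level set $\mathcal{C}_{\delta}$ is invariant under $(\ell, u) \mapsto (-u, -\ell)$, and the unique root $\ell^{\ast}$ of $\underline{g}(\ell) = \delta$ is precisely the negative of the unique root $u^{\ast}$ of $\overline{g}(u) = \delta$ from Proposition~\ref{prop:cvx-g-limit-1}.

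Given these identities, I would apply Proposition~\ref{prop:cvx-g-limit-1} to the reflected sequence $(\ell'_n, u'_n) := (-u_n, -\ell_n)$, which still lies in $\mathcal{C}_{\delta}$. Since $u_n \nearrow +\infty$ by assumption, we have $\ell'_n = -u_n \searrow -\infty$, so Proposition~\ref{prop:cvx-g-limit-1} immediately yields $u'_n = -\ell_n \to u^{\ast} = -\ell^{\ast}$, i.e., $\ell_n \to \ell^{\ast}$.

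For a self-contained argument that mirrors the proof of Proposition~\ref{prop:cvx-g-limit-1} step by step, I would instead introduce the family $\widetilde{g}_n(\ell) := \int_0^{+\infty} [\Phi(u_n - t) - \Phi(\ell + t) - (1 - \epsilon)]^+ \dLeb{t}$, which is continuous, strictly decreasing in $\ell$, and pointwise bounded above by $\underline{g}$. After fixing some $\bar{\ell}$ with $\underline{g}(\bar{\ell}) < \delta$ and restricting to the compact interval $[\ell_1, \bar{\ell}]$, the intermediate value theorem locates the unique root of $\widetilde{g}_n(\ell) = \delta$ inside this interval for all $n$. The main technical step is then to show $\widetilde{g}_n \to \underline{g}$ uniformly on $[\ell_1, \bar{\ell}]$ by bounding $|\widetilde{g}_n(\ell) - \underline{g}(\ell)|$ by an integral of $1 - \Phi(u_n - t)$ over a $t$-interval that is bounded uniformly in $\ell$ and $n$ (using $\epsilon \in (0,1/2)$ to control the support of the integrand), and then applying the dominated convergence theorem as $n \to \infty$. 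Continuity of the strictly monotone inverse $\underline{g}^{-1}$ on the compact range then delivers $\ell_n \to \ell^{\ast}$.

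The main obstacle in this direct route is precisely the uniform estimate above, which requires care to ensure that the dominating integrand does not depend on $\ell$ or $n$; the symmetry route sidesteps this entirely and is therefore the expected main path of proof.
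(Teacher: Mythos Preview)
Your proposal is correct and takes essentially the same approach as the paper: the paper's proof establishes the reflection symmetry \(g_{\epsilon}(\ell,u)=g_{\epsilon}(-u,-\ell)\) via \(\Phi(x)=1-\Phi(-x)\) and then applies Proposition~\ref{prop:cvx-g-limit-1} to the reflected sequence \((-u_n,-\ell_n)\), exactly as in your ``cleanest plan.'' Your additional self-contained route is unnecessary here but correctly mirrors the structure of the Proposition~\ref{prop:cvx-g-limit-1} argument.
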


\begin{proof}
By \(\Phi(x) = 1 - \Phi(-x)\), for any \(t \in \reals\), we have
\begin{align*}
  \Phi(u - t) - \Phi(\ell + t)
  & = 1 - \Phi(-u + t) - (1 - \Phi(-\ell - t)) \\
  & = \Phi(-\ell - t) - \Phi(-u + t),
\end{align*}
therefore \(g_{\epsilon}(\ell, u) = g_{\epsilon}(-u, -\ell)\), \ie{}
\(\set{(-u_n, -\ell_n)} \subseteq \mathcal{C}_{\delta}\) is a sequence of points
on \(\mathcal{C}_{\delta}\) with \(-u_n \searrow -\infty\) as \(n \to \infty\). Then, Proposition~\ref{prop:cvx-g-limit-1} yields that \(-\ell_n \to - \ell^{\ast}\).
\end{proof}



\subsection{Approximation bound of \(\hat{\mathcal{Z}}^N_0\)}
\label{sec:approx-bound-summary}

We summarize the approximation bounds derived in
Sections~\ref{sec:approx-pieces-1N} and~\ref{sec:approx-pieces-i} as follows.

\begin{theorem}\label{thm:non-asymptotic-bd}
Let \(\bd \big(\hat{\mathcal{Z}}^N_0\big)\) be the boundary of \(\hat{\mathcal{Z}}^N_0\),
then we have
\begin{align}
\delta
& \leq \max_{(\ell, u) \in \bd{} \big(\hat{\mathcal{Z}}^N_0\big)} g_{\epsilon}(\ell, u) \nonumber\\
& \leq \max\set{ (1 + O(\Delta^N)) \cdot \delta, \overline{g}(u_N), \underline{g}(\ell_1)}, \label{eq:approx-final-bd}
\end{align}
where
\(\displaystyle \Delta^N := \max_{i \in [N-1]} \norm{(\ell_i, u_i) - (\ell_{i+1}, u_{i+1})}_1\).
Furthermore, we have
\begin{align}
  \lim_{\substack{\Delta^N \to 0 \\ \ell_n \searrow -\infty \\ u_n \nearrow +\infty}}
  \max_{(\ell, u) \in \bd{} \big(\hat{\mathcal{Z}}^N_0\big)} g_{\epsilon}(\ell, u) = \delta. \label{eq:approx-final-limit}
\end{align}
\end{theorem}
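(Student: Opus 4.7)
The plan is to decompose $\bd(\hat{\mathcal{Z}}^N_0)$ into the three geometrically distinct pieces created by the defining constraints---the vertical ray $V := \{(\ell_1, u) : u \geq u_1\}$ from~\eqref{eq:approx-ZN0-p1}, the horizontal ray $H := \{(\ell, u_N) : \ell \leq \ell_N\}$ from~\eqref{eq:approx-ZN0-pN}, and the line segments $S_i$ joining $(\ell_i, u_i)$ to $(\ell_{i+1}, u_{i+1})$ from~\eqref{eq:approx-ZN0-pi} for $i \in [N-1]$---and to bound $g_{\epsilon}$ on each piece. The lower bound $\delta \leq \max_{(\ell,u) \in \bd(\hat{\mathcal{Z}}^N_0)} g_{\epsilon}(\ell, u)$ is immediate, since each vertex $(\ell_i, u_i)$ lies in $\bd(\hat{\mathcal{Z}}^N_0) \cap \mathcal{C}_{\delta}$ and therefore satisfies $g_{\epsilon}(\ell_i, u_i) = \delta$ by construction.

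For the upper bound in~\eqref{eq:approx-final-bd}, I would handle each piece in turn. On $V$, monotonicity of $g_{\epsilon}$ in $u$ together with~\eqref{eq:approx-err-ZN0-p1} yields $\sup_{u \geq u_1} g_{\epsilon}(\ell_1, u) = \underline{g}(\ell_1)$; dually, monotonicity in $\ell$ with~\eqref{eq:approx-err-ZN0-pN} gives $\sup_{\ell \leq \ell_N} g_{\epsilon}(\ell, u_N) = \overline{g}(u_N)$ on $H$. On each segment $S_i$, I would invoke Proposition~\ref{prop:cvx-g-2points} with endpoints $(\ell_i, u_i),\,(\ell_{i+1}, u_{i+1}) \in \mathcal{C}_{\delta}$: part~(2) delivers $\sup_{\lambda \in [0, 1]} g_{\epsilon}(\ell_{\lambda}, u_{\lambda}) \leq \tau_s^2 \delta$, and part~(3) delivers $\tau_s \leq 1 + O\big(\|(\ell_i, u_i) - (\ell_{i+1}, u_{i+1})\|_1\big)$. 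Squaring, using $(1 + O(\cdot))^2 = 1 + O(\cdot)$, and maximizing over $i \in [N-1]$ produces the uniform bound $(1 + O(\Delta^N))\delta$. Taking the max of the three pieces gives~\eqref{eq:approx-final-bd}.

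For the asymptotic statement~\eqref{eq:approx-final-limit}, I would show that each of the three terms in the max collapses to $\delta$. Trivially, $(1 + O(\Delta^N))\delta \to \delta$ as $\Delta^N \to 0$. As $\ell_N \searrow -\infty$, Proposition~\ref{prop:cvx-g-limit-1} applied to the leftmost-vertex sequence forces $u_N \to u^{\ast}$ with $\overline{g}(u^{\ast}) = \delta$, so continuity of $\overline{g}$ (by dominated convergence) yields $\overline{g}(u_N) \to \delta$. Dually, as $u_1 \nearrow +\infty$, Proposition~\ref{prop:cvx-g-limit-2} gives $\ell_1 \to \ell^{\ast}$ with $\underline{g}(\ell^{\ast}) = \delta$, whence $\underline{g}(\ell_1) \to \delta$. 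Combined with the lower bound already established, the max is squeezed to $\delta$.

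The main obstacle is bookkeeping for the segment bound. The implicit constant in Proposition~\ref{prop:cvx-g-2points}(3) depends on $M_{i, i+1} := \max\{|\ell_i|, |\ell_{i+1}|, |u_i|, |u_{i+1}|\}$ and on $\delta$, so the uniform $O(\Delta^N)$ claim requires this constant to be controlled across all $i$. For fixed extreme vertices $\ell_N, u_1$ this is automatic, which is what~\eqref{eq:approx-final-bd} asserts; in the joint limit~\eqref{eq:approx-final-limit} the segment term is made uniformly small either by taking $\Delta^N$ to decay fast enough to absorb the growth of $M_{i, i+1}$, or equivalently by interpreting the limit sequentially (refine the partition first, then push $\ell_N$ and $u_1$ to the extremes).
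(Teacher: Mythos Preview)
Your proposal is correct and follows essentially the same route as the paper: the paper's proof simply cites Proposition~\ref{prop:cvx-g-2points} for the segment pieces, definitions~\eqref{eq:approx-err-ZN0-pN}--\eqref{eq:approx-err-ZN0-p1} for the two rays, and Propositions~\ref{prop:cvx-g-limit-1}--\ref{prop:cvx-g-limit-2} for the limit, which is exactly the decomposition you spell out. Your closing paragraph on the dependence of the implied constant on $M_{i,i+1}$ is a fair caveat that the paper leaves implicit; the non-asymptotic bound~\eqref{eq:approx-final-bd} is intended for fixed extreme vertices, so your reading is the correct one.
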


\begin{proof}
In~\eqref{eq:approx-final-bd}, the first inequality is by construction, and
the second inequality follows from
Proposition~\ref{prop:cvx-g-2points} and definitions~\eqref{eq:approx-err-ZN0-pN}--\eqref{eq:approx-err-ZN0-p1}. Finally, equality~\eqref{eq:approx-final-limit} follows
from Propositions~\ref{prop:cvx-g-2points},~\ref{prop:cvx-g-limit-1},
and~\ref{prop:cvx-g-limit-2}.
\end{proof}

\section{Numerical Experiments}\label{sec:exps}

We evaluate the approximation bound of \(\hat{\mathcal{Z}}^N_0\) in Section~\ref{sec:exps-approx} and conduct a case study on~\DCOPF{} in
Section~\ref{sec:case-study}. All experiments are implemented using the
\href{https://www.gurobi.com/documentation/9.1/quickstart_mac/cs_python.html}{Python
  API} of
\href{https://www.gurobi.com/products/gurobi-optimizer/prior-version-enhancements/}{Gurobi
  \(9.1.1\)} and conducted on a single node of the
\href{https://arc-ts.umich.edu/greatlakes/}{Great Lakes} cluster provided by
University of Michigan, which contains two \(3.0\)GHz
\href{https://arc-ts.umich.edu/greatlakes/configuration/}{Intel Xeon Gold 6154}
CPUs.

\subsection{Approximation bound of \(\hat{\mathcal{Z}}^N_0\)}\label{sec:exps-approx}
\begin{algorithm}[tbp]
  \caption{\(\hat{\mathcal{Z}}^N_0\) Construction}\label{alo:obtain-N-pnts}
  \SetKwInOut{Inputs}{Inputs}
  \SetKw{Return}{return}
  \Inputs{\(\epsilon \in (0, \frac{1}{2}), \delta > 0\), and an odd integer \(N \geq 3\).}
  Find \(u_0, \overline{\ell}\) such that
  \[ g_{\epsilon}(-u_0, u_0) = \delta \quad \text{and} \quad
     \underline{g}(\overline{\ell}) = \delta,
     \] \\
  Obtain \((N-1) / 2\) evenly spaced points \(\set{\ell_i}_{i=1}^{(N-1)/2}\) over the 
     interval \([-u_0, \overline{\ell}]\). \\
  \For{\(i = 1, 2, \ldots, (N-1) / 2\)}{
    Find \(u_i\) such that \(g(\ell_i, u_i) = \delta\).
  }
  Collect all points
  \[\mathcal{L} := \set{(\ell_i, u_i), (-u_i, -\ell_i)}_{i=1}^{(N-2)/2} \cup \set{(-u_0, u_0)}\] \\
  \Return{\(\mathcal{L}\).}
\end{algorithm}
Given an odd integer \(N \geq 3\), we construct \(\hat{\mathcal{Z}}^N_0\)
using Algorithm~\ref{alo:obtain-N-pnts} and report the approximation bound
\begin{align*}
  \text{Apx-Bd} := \max\set{ (1 + O(\Delta^N)) \cdot \delta, \overline{g}(u_N), \underline{g}(\ell_1)} / \delta
\end{align*}
of \(\hat{\mathcal{Z}}^N_0\) in Table~\ref{tab:exps-approx-bd} with respect to various values of \(\epsilon\),
\(\delta\), and \(N\) using Theorem~\ref{thm:non-asymptotic-bd}.
From this table, we observe that for fixed \(\epsilon\) and \(\delta\), Apx-Bd decreases as \(N\) increases. For
example, when \(\epsilon = 0.01, \delta = 0.1\), Apx-Bd
improves from \(1.012\) to \(1.002\) when \(N\) increases from \(3\) to \(29\).
Furthermore, the larger the Wasserstein radius \(\delta\) is, the better our approximation becomes. For example, when \(\epsilon = 0.01, N = 3\), Apx-Bd improves from \(1.114\) to \(1.012\) as \(\delta\) increases from \(0.01\) to \(0.10\). In
addition, the marginal improvement in Apx-Bd diminishes as \(N\)
increases. For example, when \(\epsilon = 0.01, \delta = 0.1\), the improvement in Apx-Bd is \(0.004\) as \(N\) increases from \(3\) to \(9\), while from \(N = 19\) to
\(N=29\) the improvement is less than \(0.001\).

\begin{table}[!htbp]
\centering
\caption{Approximation bound of \(\hat{\mathcal{Z}}^N_0\)}\label{tab:exps-approx-bd}
\vspace{-1em}
\subfloat[]{%
\begin{tabular}{lllr}
\toprule
\(\epsilon\) & \(\delta\) & \(N\) & Apx-Bd \\
\midrule
0.01         & 0.01       & 3     & 1.114 \\
             &            & 5     & 1.076 \\
             &            & 9     & 1.046 \\
             &            & 19    & 1.023 \\
             &            & 29    & 1.016 \\
\midrule
             & 0.05       & 3     & 1.023 \\
             &            & 5     & 1.016 \\
             &            & 9     & 1.010 \\
             &            & 19    & 1.006 \\
             &            & 29    & 1.004 \\
\midrule
             & 0.10       & 3     & 1.012 \\
             &            & 5     & 1.008 \\
             &            & 9     & 1.005 \\
             &            & 19    & 1.002 \\
             &            & 29    & 1.002 \\
\bottomrule
\end{tabular}
}
\subfloat[]{%
\begin{tabular}{lllr}
\toprule
\(\epsilon\) & \(\delta\) & \(N\) & Apx-Bd \\
\midrule
0.05         & 0.01       & 3     & 1.537 \\
             &            & 5     & 1.350 \\
             &            & 9     & 1.207 \\
             &            & 19    & 1.102 \\
             &            & 29    & 1.068 \\
\midrule
             & 0.05       & 3     & 1.137 \\
             &            & 5     & 1.091 \\
             &            & 9     & 1.055 \\
             &            & 19    & 1.028 \\
             &            & 29    & 1.019 \\
\midrule
             & 0.10       & 3     & 1.068 \\
             &            & 5     & 1.046 \\
             &            & 9     & 1.027 \\
             &            & 19    & 1.013 \\
             &            & 29    & 1.009 \\
\bottomrule
\end{tabular}}
\end{table}

\subsection{A case study on \DCOPF{}}\label{sec:case-study}

In a transmission grid, the \OPF{} problem seeks to find a minimum-cost plan for power generation and transmission so that all electricity loads are satisfied and all system safety conditions, including the power generation limits and the transmission capacity limits, are respected. When uncertain renewable energy (e.g., wind power) is incorporated, chance-constrained \OPF{}~\cite{bienstock-2014-chanc-const} is a natural approach to keeping the system safe with high probability. We first introduce some notation. \(\mathcal{B}\) and \(\mathcal{G}\) denote the index sets of buses and thermal
generators, respectively. For two buses \(i, j \in \mathcal{B}\), \((i, j)\) denotes the
directed branch from \(i\) to \(j\) and \(\mathcal{E}\) represents the set of
all branches. We use subscripts \(i, j \in B\), \(g \in \mathcal{G}\), or \((i, j) \in \mathcal{E}\) to denote a quantity
related a specific bus, generator, or branch. For example, \(P^{\text{max}}_g\) and \(P^{\text{min}}_g\) denote the maximum and minimum power generation capacity of thermal unit \(g \in \mathcal{G}\), respectively. For each branch
\((i, j) \in \mathcal{E}\), \(f_{ij}\) and \(f^{\text{max}}_{ij}\) 
denote the power flow on branch \((i, j)\) and its maximum capacity,
respectively. In addition, \(\overline{p}_g \in \reals\) denotes the
amount of power generation of each thermal generator
\(g \in \mathcal{G}\), and \(d_i\) and \(\overline{\theta}_i\) denote the
electricity load and voltage phase angle at each bus \(i \in \mathcal{B}\), respectively.

We model the uncertain power output of each
renewable source \(i \in \mathcal{B}\) as \(\mu_i + \tilde{\xi}_i\), where \(\mu_i\)
represents the forecast amount of power generation and \(\tilde{\xi}_i\) is a zero-mean random variable representing the forecast error. In
response to the uncertain fluctuation in renewable output, we adjust the power
outputs of the thermal units using Automatic Generation Control, \ie{},
\(\tilde{p}_g := \overline{p}_g - \alpha_g \cdot \tilde{\xi}_{\text{tot}}\) for all \(g \in \mathcal{G}\),
where decision variable \(\alpha_g\) is called the participation factor of \(g\), and it represents the percentage of the total forecast error
\(\xitot{} := \sum_{i \in \mathcal{B}} \tilde{\xi}_i\) compensated by generator
\(g\). The chance-constrained \OPF{} with (\textbf{2DRC}) is formulated as follows.
\begin{subequations}\label{eq:exps-ts-drc}
\begin{align}
  \min~
  & \sum_{g \in \mathcal{G}} c_g(\overline{p}_g) \label{eq:exps-obj}\\
  \st{}~
  & \sum_{i \in \mathcal{G}} \alpha_i = 1, \alpha \geq 0, \overline{p}_g \geq 0, \label{eq:exps-alph-domain}\\
  & \sum_{i \in \mathcal{B}} (\overline{p}_i + \mu_i + d_i) = 0, \label{eq:exps-power-bal}\\
  & B \overline{\theta} = \overline{p} + \mu + d, \label{eq:exps-pflow-bal}\\
  & \inf_{\QProb \in \mathcal{P}_g} \QProb{} \left(
    P^{\text{min}}_g \leq \overline{p}_g - \tilde{\xi}_{\text{tot}} \; \alpha_i \leq P_g^{\text{max}} \right) \geq 1 - \epsilon_g, \forall g \in \mathcal{G}, \label{eq:exps-pgen-bd}\\
  & \inf_{\QProb \in \mathcal{P}_{ij}} \QProb{} \left( f^{\text{min}}_{ij} \leq \beta_{ij} (\overline{\theta}_i - \overline{\theta}_j) +
    \left[ \breve{\mathcal{B}} (\tilde{\xi} - \tilde{\xi}_{\text{tot}} \alpha) \right]_i - \right. \nonumber{}\\
  & \left. \qquad \left[ \breve{\mathcal{B}} (\tilde{\xi} - \tilde{\xi}_{\text{tot}} \alpha)\right]_j \leq f^{\text{max}}_{ij} \right) \geq 1 - \epsilon_b, \forall (i, j) \in \mathcal{E}, \label{eq:exps-flow-bd}
\end{align}
\end{subequations}
where \(c_g(\cdot) \colon \reals \to \reals_+\) is a quadratic function
representing the fuel cost of thermal generator \(g \in \mathcal{G}\),
\(d\) denotes the vector of electricity loads, \(\overline{p}_g\) denotes the vector of power outputs, \(\beta_{ij}\) denotes the line susceptance of \((i, j) \in \mathcal{E}\), and matrices \(B\) and
\(\breve{B}\) denote the weighted Laplacian matrix and its pseudo-inverse, respectively (see
Equation~\((1.5)\) and~\((2.5)\) in~\cite{bienstock-2014-chanc-const}). For each \(g \in \mathcal{G}\) (resp. \((i,j) \in \mathcal{E}\)), \(\mathcal{P}_g\) (resp.
\(\mathcal{P}_{ij}\)) represents a Wasserstein ball centered around a
Gaussian distribution with empirical mean and covariance matrix. Finally, 
\(1-\epsilon_g\) and \(1-\epsilon_b\) are risk thresholds for the power generation limit and transmission capacity limit constraints, respectively.

We demonstrate the out-of-sample (OOS) performance of the (\textbf{2DRC}) formulation~\eqref{eq:exps-ts-drc} on a modified IEEE \(118\)-bus system, where we follow~\cite{bienstock-2014-chanc-const} to adjust the capacities of branches, and we place four wind farms at buses \(2, 7, 43\), and \(86\). The true distribution of the wind power output is assumed to be
Weibull with scale parameter \(1.0\) and shape parameters \(1.2, 3.5, 0.5, 4.0\), respectively. For a fixed
solution of~\eqref{eq:exps-ts-drc}, its OOS performance refers to the probability of violating the system safety conditions~\eqref{eq:exps-pgen-bd}--\eqref{eq:exps-flow-bd} under the true distribution. Specifically, we draw \(10,000\) samples from the true distribution to obtain an empirical estimate of OOS. In this
experiment, we first obtain \(M \in \set{5, 10, 100, 200, 500}\)
training data from the true distribution and construct the Wasserstein balls \(\mathcal{P}_g\) and \(\mathcal{P}_{ij}\) using empirical mean and covariance with respect to the five different training data sizes. Then, with \(\epsilon_g = \epsilon_b = 0.05\) and Wasserstein radii
\(\delta \in \set{0.01, 0.05, 0.1}\), we generate \(5\) random instances for all parameter settings, each of which is solved using formulation~\eqref{eq:exps-ts-drc} with (\textbf{2DRC}) and with (\textbf{CC}), respectively. In addition, we estimate the
average OOS, as well as its \(95\%\) confidence interval, for both solutions and report the results in Figs.~\ref{fig:exps-non-Gauss-oos-vs-nsmpls}--\ref{fig:exps-non-Gauss-oos-vs-delta}. From
Fig.~\ref{fig:exps-non-Gauss-oos-vs-nsmpls}, we observe that as the training data size \(M\) increases the OOS of both models improve. Nevertheless, (\textbf{2DRC}) achieves an OOS of at least \(95\%\) with as few as \(10\) training data, while (\textbf{CC}) fails to
achieve the target threshold even with \(500\) training data. From Fig.~\ref{fig:exps-non-Gauss-oos-vs-delta}, we notice that the OOS of (\textbf{2DRC}) exceeds the target risk threshold once \(\delta\) reaches \(0.05\), and it keeps improving as \(\delta\) increases further.
\begin{figure}[!htbp]
\centering
\subfloat[\(\delta = 0.05\)\label{fig:exps-non-Gauss-oos-vs-nsmpls}]{%
  \includegraphics[width=0.8\linewidth]{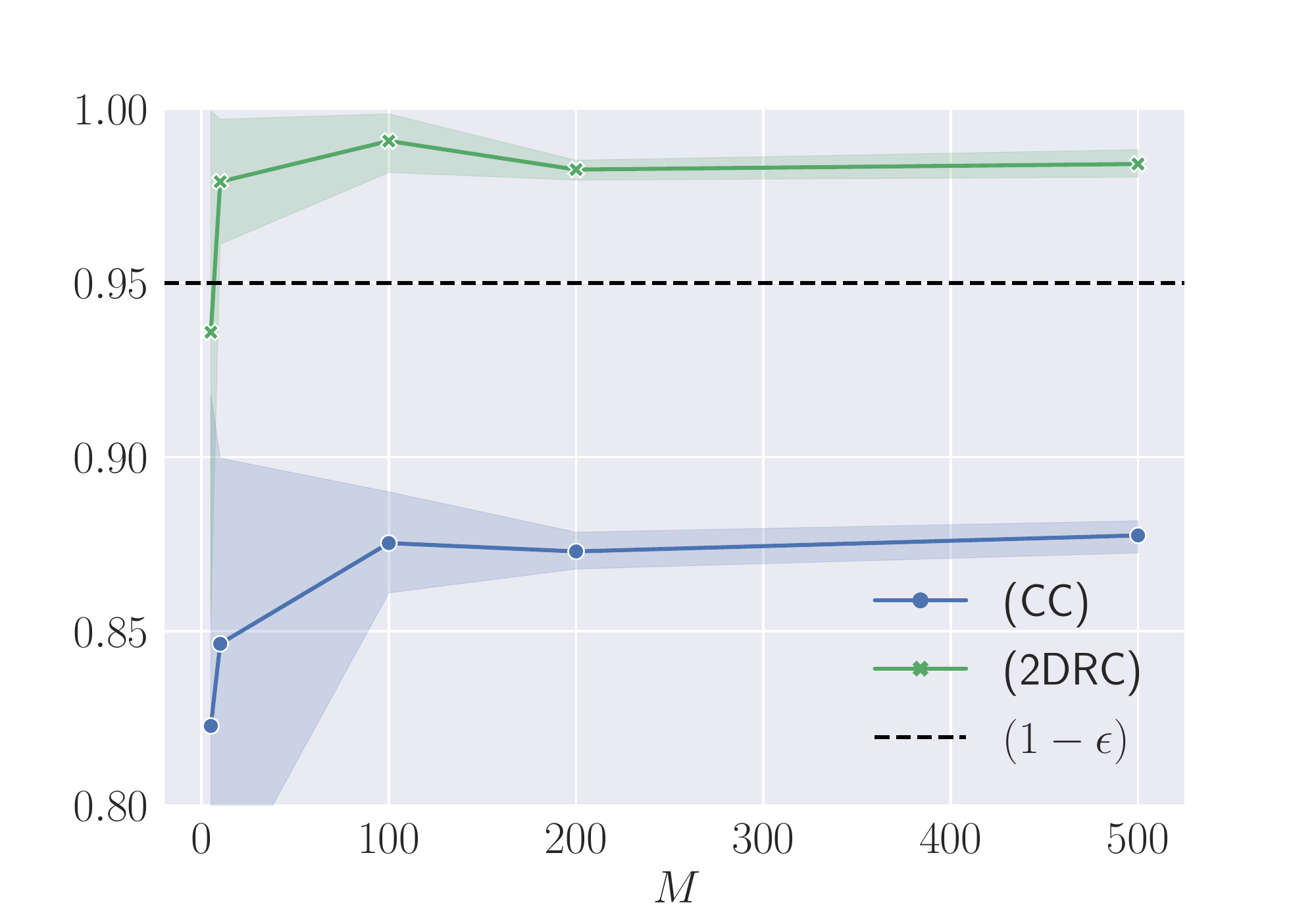}}
\\
\subfloat[\(M = 100\)\label{fig:exps-non-Gauss-oos-vs-delta}]{%
\includegraphics[width=0.8\linewidth]{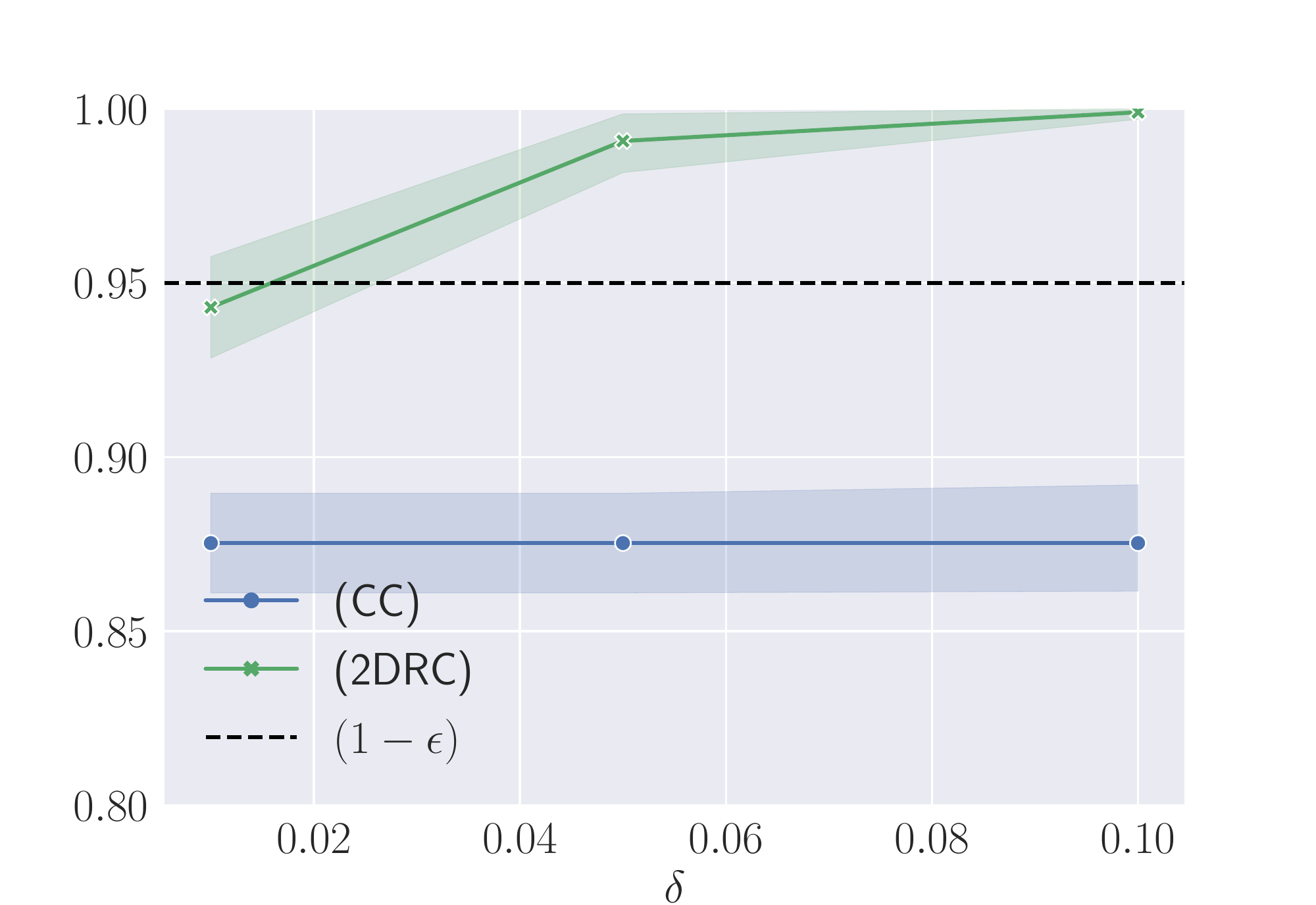}
}
\caption{OOS performance on different problem sizes.}
\end{figure}

Next, we demonstrate the strength of the proposed inner approximation \(\hat{\mathcal{Z}}^N_0\) on a problem instance with
\(\epsilon = 0.05, \delta = 0.08, M = 100\). From Fig.~\ref{fig:exps-opt-oos-vs-N}, we observe that as the number of pieces \(N\) increases the OOS decreases but still remains above the target threshold of \(95\%\), while the optimal value (OPT) of~\eqref{eq:exps-ts-drc} improves. This makes sense because \(\hat{\mathcal{Z}}^N_0\) becomes tighter as \(N\) increases, as promised by Theorem~\ref{thm:non-asymptotic-bd}. Nonetheless, Fig.~\ref{fig:exps-opt-oos-vs-N} also indicates that the change in OOS and OPT is quite limited as \(N\) increases, implying that in reality a small \(N\) can already lead to an excellent approximation.
\begin{figure}[!htbp]
\centering
\includegraphics[width=0.8\linewidth]{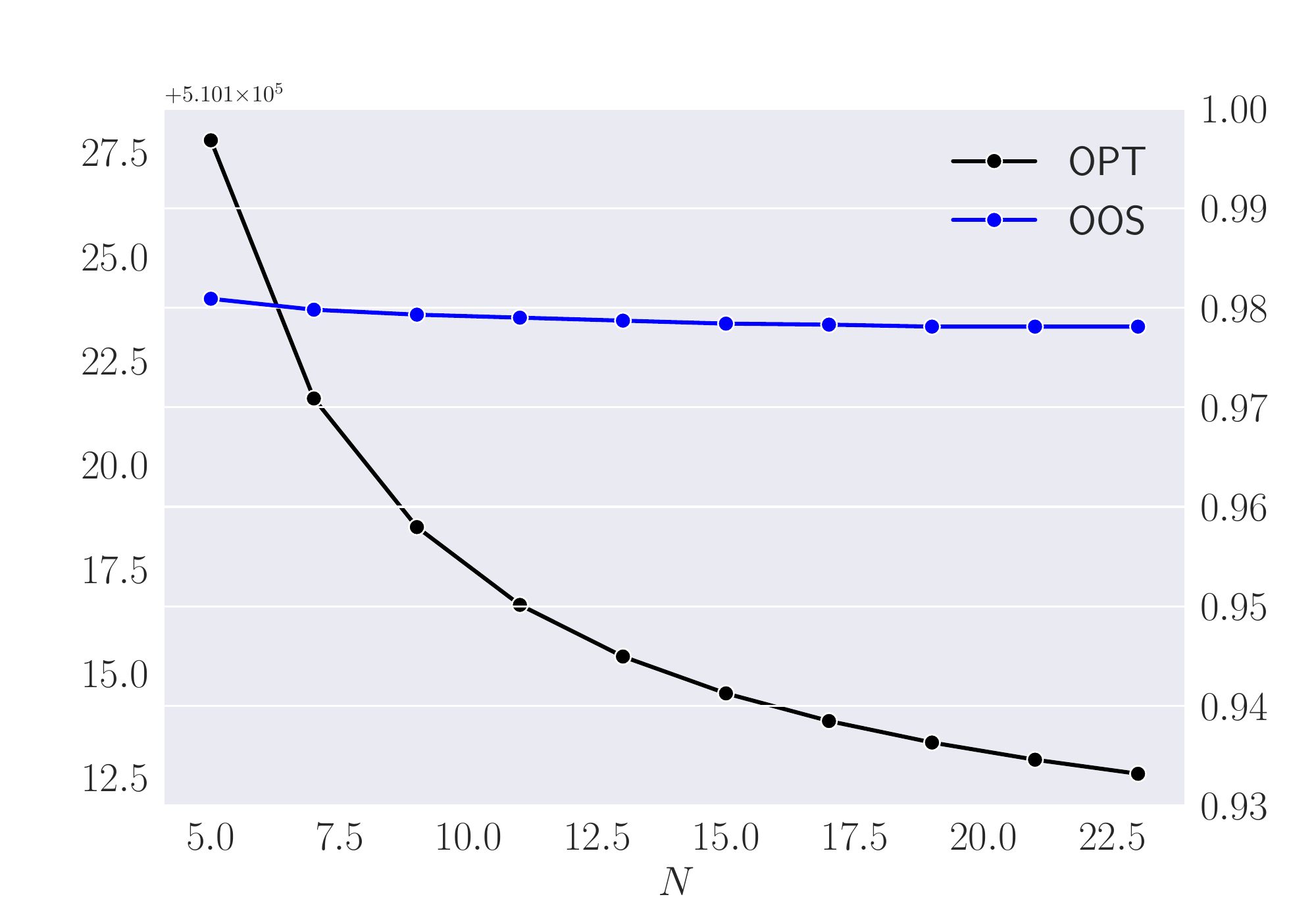}
\caption{Change in OPT and OOS when \(N\) increases.}\label{fig:exps-opt-oos-vs-N}
\end{figure}

Finally, we demonstrate the scalability of our approach using IEEE systems with various sizes. We report the sizes and run time of these instances in Table~\ref{tab:exps-test-time}. These results show that the run time increases mildly as the size of the instance increases. For example, we are able to solve IEEE instances with \(2,000+\) and \(3,000+\) buses within \(10\) seconds.
\begin{table}[!htbp]
\centering
\caption{Computational Time on IEEE systems with various sizes with \(\epsilon = 0.1, \delta = 0.5, N = 7, M = 1000\)}
\label{tab:exps-test-time}
\begin{tabularx}{1.0\linewidth}{cccccc}
  \toprule
                        & case30 & case39 & case118 & case2383 & case3120 \\
  \midrule
  \(\abs{\mathcal{B}}\) & 30     & 39     & 118     & 2383     & 3120     \\
  \(\abs{\mathcal{E}}\) & 41     & 46     & 186     & 2896     & 3693     \\
  \(\abs{\mathcal{G}}\) & 6      & 10     & 54      & 327      & 505      \\
  \# of renewables      & 3      & 4      & 4       & 10       & 10       \\
  \midrule
  Time (sec)            & 0.028  & 0.099  & 0.172   & 6.852    & 8.201    \\
  \bottomrule
\end{tabularx}
\end{table}

\printbibliography{}

\end{document}